\def\bbQ{{\mathbb Q}}
\def\bbZ{{\mathbb Z}}
\def\bbC{{\mathbb C}}
\def\bbR{{\mathbb R}}
\def\tra{\mathrm{tr}}
\def\End{\mathrm{End}}
\def\Hom{\mathrm{Hom}}
        \def\discr{\mathrm{discr}}
\def\GLGL{\mathrm{GL}}
\theoremstyle{plain}
\newtheorem{thm}{Theorem}[section]
\newtheorem{lem}[thm]{Lemma}
\newtheorem{cor}[thm]{Corollary}
\newtheorem{prop}[thm]{Proposition}
\theoremstyle{definition}
\newtheorem{defn}[thm]{Definition}
\newtheorem{ex}[thm]{Example}
\newtheorem{exs}[thm]{Examples}
           \newtheorem{rem}[thm]{Remark}
\begin{document}
\title[Odd and Even Elliptic Curves]
{Odd and Even Elliptic Curves with Complex Multiplication}
\author[Yuri\ G.\ Zarhin]{Yuri\ G.\ Zarhin}
\address{Department of Mathematics, Pennsylvania State University,
University Park, PA 16802, USA}

 \email{zarhin\char`\@math.psu.edu}
 \thanks{The  author  was partially supported by Simons Foundation Collaboration grant   \# 585711 
 and the Travel Support for Mathematicians Grant MPS-TSM-00007756 from the Simons Foundation. Part of this work was done
  in August-September 2025 during his stay at Max-Planck-Institut f\"ur Mathematik (Bonn, Germany), whose hospitality and support are gratefully acknowledged.}
 
 \begin{abstract}
 We call an order $O$ in a quadratic field  $K$ {\sl odd} (resp. {\sl even}) if its discriminant  is an odd (resp. 
 even) integer.
 We call an elliptic curve $E$ over  $\bbC$  with CM     {\sl odd} (resp. {\sl even}) if its endomorphism ring $\End(E)$ is an odd (resp. even)
 order in the imaginary quadratic field $\End(E)\otimes\bbQ$.

 Suppose that  $j(E)\in \bbR$ and   let us consider the set $\mathcal{J}(\bbR,E)$  of all $j(E^{\prime})$ where $E^{\prime}$ is any elliptic curve   that enjoys the following properties.
 
 \begin{itemize}
 \item
 $E^{\prime}$ is
 isogenous  to $E$;
 \item
    $j(E^{\prime})\in\bbR$;
    \item
 $E^{\prime}$ has the same parity as $E$.
 \end{itemize}
 
 We prove that the closure of $\mathcal{J}(\bbR,E)$ in $\bbR$  is the closed semi-infinite interval $(-\infty,1728]$  (resp. the whole $\bbR$) if $E$ is odd (resp. even). 
 
 This paper was inspired by  a question
 of Jean-Louis Colliot-Th\'el\`ene and Alena Pirutka about the distribution of $j$-invariants of certain  elliptic curves of CM type.
 \end{abstract}
 
 \subjclass[2020]{11G07, 14H52}
\keywords{Quadratic orders,   discriminants,  elliptic curves, complex multiplication}
 
\maketitle

As usual $\bbZ,\bbQ,\bbR,\bbC$ stand for the ring of integers and the fields of rational, real and complex numbers respectively.
If $n$ is a positive integer then we write $\bbZ[1/n]$ for the subring of $\bbQ$ generated by $\bbZ$ and $1/n$. More generally,
if $F$ is a field of characteristic zero and $A$ is a subring of $F$ (with $1$) then we write $A[1/n]$ for the subring 
 of $F$ generated by $A$ and $1/n$; clearly, $A[1/n]$ is a  $\bbZ[1/n]$-subalgebra of $F$. We write $\bbZ_{(2)}$  for the subring of $\bbQ$
 that consists of all fractions $\frac{a}{n}$ where $a \in \bbZ$ and $n$ an odd integer.

The aim of this note is to introduce and study the notion of a {\sl parity} for   elliptic curves over $\bbC$ with complex multiplication (CM) in the following way.
Let $E$ be an elliptic curve, whose endomorphism ring $\End(E)$ is an {\sl order} in an imaginary quadratic field.  We call $E$ {\sl odd} (resp. {\sl even}) if the discriminant of $\End(E)$ 
is an {\sl odd} (resp. {\sl even}) integer. We call its $j$-invariant  $j(E)\in \bbC$  {\sl odd} (resp. {\sl even}) if $E$ is  {\sl odd} (resp. {\sl even}).
It turns out that
both sets of even and odd  $j$-invariants are everywhere dense in $\bbC$.
Surprisingly,  this is not the case when we restrict ourselves to
 the  sets of even and odd  {\sl real } $j$-invariants (i.e., when the corresponding curves are defined over $\bbR$). Namely, while the set of   real {\sl even} $j$-invariants is dense in $\bbR$,
  the set of   real {\sl odd} $j$-invariants lies in an open semi-infinite interval $(-\infty, 1728)$ and everywhere dense there. Actually, we prove stronger versions of these results, dealing with isogeny classes of CM elliptic curves with the same ``parity''.
  
  The paper is organized as follows. In Section \ref{Orders} we discuss various  properties of quadratic orders with odd (resp. even) discriminants. Section \ref{Jinvariants} contains the statements of our main results about $j$-invariants of odd and even elliptic curves.
  In Section \ref{halfPlane} we discuss auxiliary results
about isogenies of complex elliptic curves. We prove our main results in Section \ref{mainproof},
using properties of ``odd'' and ``even'' orders and parity preservation under isogenies of odd degrees that will be proven
in Section \ref{OddEven} and \ref{OddEvenE} respectively. 
In Section \ref{revisit} we  ``classify'' (actually, list all the $j$-invariants of)   CM elliptic curves, whose $j$-invariant is real and
the endomorphism ring is an  order with given odd discriminant.
  
  {\bf Acknowledgements}.  I am grateful to Jean-Louis Colliot-Th\'el\`ene and Alena Pirutka for  an interesting and stimulating question
about the distribution of $j$-invariants of certain  elliptic curves of CM type and for their useful comments to a preliminary version of this paper,
which  may be viewed as an extension (or a variant) of my answer \cite[Sect. 8]{CTP}. I thank David Masser and Serge Vladuts for their interest in this paper and stimulating
discussions. My special thanks go to the referee, whose comments helped to improve the exposition.
  
  \section{Orders in quadratic fields}
  \label{Orders}
  Let us start with a discussion of {\sl odd} and {\sl even} orders in arbitrary quadratic fields.

 Let $K$ be a quadratic field, $O_K$ its ring of integers, and 
 $$\mathrm{tr}=\tra_K: K \to \bbQ$$
   be the corresponding trace map attached to the field extension $K/\bbQ$.
 We have 
 $$2\bbZ=\tra(\bbZ)\subset \mathrm{tr}(O_K)\subset \bbZ.$$  
 Let $O$ be an order in $K$ and  let  $\mathfrak{f}=\mathfrak{f}_O$ denote the  {\sl conductor} of $O$, which is
a positive integer.  We have
$$O=\bbZ+\mathfrak{f}\cdot O_K\subset O_K;$$ 
$\mathfrak{f}$ coincides with the index $[O_K:O]$ of the additive subgroup $O$ in $O_K$
\cite[Ch. 2, Sect. 7, Th.1]{BS}.
Since  $\bbZ\subset O$ and $\mathrm{tr}(\bbZ)=2\bbZ$,
\begin{equation}
\label{trO2}
 \mathrm{tr}(O)=\bbZ \quad \text{or} \ 2\bbZ.
 \end{equation}

Recall that the discriminant $\mathrm{discr}(O)\in \bbZ$ of $O$ is the discriminant of the symmetric bilinear form
$$O \times O \to \bbZ, \ a,b \mapsto \mathrm{tr}(ab).$$
By definition, the discriminant $\mathrm{discr}(K)$  of the field $K$ is $\mathrm{discr}(O_K)$.  It is well known \cite[Ch. 2, Sect. 7, Th.1]{BS} that
\begin{equation}
\label{disF}
\mathrm{discr}(O)=\mathfrak{f}_O^2 \cdot \mathrm{discr}(O_K)=\mathfrak{f}_O^2\cdot \mathrm{discr}(K).
\end{equation}

\begin{defn}
\label{DefOd}
An order $O$ is called {\sl odd} if it enjoys the following equivalent properties.
\begin{itemize}
\item[(i)]
$\mathrm{tr}(O)=\bbZ$.
\item[(ii)]
$\mathrm{discr}(O)$ is an odd integer.
\item[(iii)]
Both $\mathrm{discr}(K)$ and the conductor $\mathfrak{f}_O$  are odd integers.
\item[(iv)]
There exists an integer $D \equiv 1 (\bmod 4)$ that is not a square and such that
$$\frac{1+\sqrt{D}}{2} \in O.$$
\item[(v)]
There exists an integer $D \equiv 1 (\bmod 4)$ that is not a square such that
$$K=\bbQ(\sqrt{D}), \quad
 O=\bbZ+\bbZ \frac{1+\sqrt{D}}{2}=\bbZ\left[\frac{1+\sqrt{D}}{2}\right].$$
%(If this the case then $\mathrm{discr}(O)=D$.)
%\item[(vi)] There are exist a  square-free integer $d \equiv 1 (\bmod 4)$ and an odd positive integer $f$ such that
%$$K=\bbQ(\sqrt{d}), \quad \frac{1+f\sqrt{d}}{2} \in O.$$
\end{itemize}
\end{defn}

\begin{defn}
\label{DefEv}
An order $O$ in a quadratic field $K$ is called {\sl even} if it enjoys the following equivalent properties.
\begin{itemize}
\item[(i)]
$\mathrm{tr}(O)=2\bbZ$.
\item[(ii)]
$\mathrm{discr}(O)$ is an even integer.
\item[(iii)]
Either $\mathrm{discr}(K)$ or the conductor $\mathfrak{f}_O$  is an even integer.
%\item[(iv)]
%There exists an integer $D$ that  is not a square and such that
%$$O=\bbZ+\bbZ\sqrt{D}=\bbZ\left[\sqrt{D}\right].$$
%(If this the case then $\mathrm{discr}(O)=4D$.)
\end{itemize}
\end{defn}

Clearly, every order $O$ is either odd or even. We prove the equivalences presented in Definition \ref{DefOd} and 
the equivalences presented in 
Definition \ref{DefEv} in Section \ref{OddEven}.

\begin{defn}
Let $E$ be an elliptic curve with complex multiplication (CM) over the field $\bbC$ of complex numbers.
Then its endomorphism ring $\End(E)$ is an order in the imaginary quadratic field $$K:=\End(E)\otimes\bbQ=
:\End^0(E).$$
We say that $E$ is {\sl odd} (resp. {\sl even}) if the  order $\End(E)$  is {\sl odd} (resp. {\sl even}).
\end{defn}

Clearly, every elliptic curve over $\bbC$ with CM is either odd or even. The following assertions will be proven in Section \ref{OddEvenE}.

\begin{prop}
\label{isogPar}
Let $\phi: E_1 \to E_2$ be an isogeny of complex elliptic curves with CM.
Suppose that $n=\deg(\phi)$ is an {\sl odd} integer.
Then $E_1$ is {\sl odd} (resp. {\sl even}) if and only if $E_2$ is {\sl odd} (resp. {\sl even}).
In other words, $E_1$ and $E_2$ have the same parity.
\end{prop}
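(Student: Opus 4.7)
My strategy is to reduce the parity comparison to a divisibility relation between conductors and then extract that relation from the isogeny via lattice bookkeeping.

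First, since $E_1$ and $E_2$ are isogenous, $\End^0(E_1)=\End^0(E_2)=:K$ is one fixed imaginary quadratic field, and $O_i:=\End(E_i)$ are both orders in $K$. Let $\f_i$ denote the conductor of $O_i$. By \eqref{disF}, $\discr(O_i)=\f_i^2\cdot\discr(K)$. If $\discr(K)$ is even, both discriminants are automatically even and both curves are even, so there is nothing to prove. Hence I assume $\discr(K)$ is odd, in which case the parity of $E_i$ coincides with the parity of $\f_i$; it will suffice to show that $\f_1$ and $\f_2$ share the same parity.

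Next I pass to the lattice model $E_i=\C/L_i$, so that $O_i=\{\alpha\in\C:\alpha L_i\subset L_i\}$. After rescaling one of the lattices, I may assume that $\phi$ is induced by an inclusion $L_1\subset L_2$ with $[L_2:L_1]=n$, which immediately gives the chain $nL_2\subset L_1\subset L_2$. From this chain I derive two symmetric containments of orders. For $\alpha\in O_1$ one has $n\alpha L_2=\alpha(nL_2)\subset\alpha L_1\subset L_1\subset L_2$, whence $n\alpha\in O_2$; thus $nO_1\subset O_2$. For $\alpha\in O_2$ one has $n\alpha L_1\subset n\alpha L_2\subset nL_2\subset L_1$, whence $n\alpha\in O_1$; thus $nO_2\subset O_1$.

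To finish, fix a $\Z$-basis $\{1,\omega\}$ of $O_K$, so that $O_i=\Z+\f_i\Z\omega$. The inclusion $nO_2\subset O_1$ applied to the element $n\f_2\omega$ forces $\f_1\mid n\f_2$ (the $\omega$-coefficient of any element of $O_1$ is a multiple of $\f_1$, by linear independence of $\{1,\omega\}$ over $\Q$); symmetrically $\f_2\mid n\f_1$. Since $n$ is odd, taking $2$-adic valuations of both divisibilities gives $v_2(\f_1)=v_2(\f_2)$, so $\f_1$ and $\f_2$ indeed have the same parity. The only step I expect to require a touch of care is the passage from the lattice chain $nL_2\subset L_1\subset L_2$ to the symmetric containments $nO_i\subset O_{j}$ of orders; the rest of the argument is routine manipulation with orders in a quadratic field.
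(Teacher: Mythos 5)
Your proof is correct, but it takes a genuinely different route from the paper's. The paper never touches lattices or conductors in this argument: it uses the dual isogeny $\psi$ (with $\psi\circ\phi=n$) to build an explicit isomorphism of $\Q$-algebras $\Phi:\End^0(E_1)\to\End^0(E_2)$, $u\mapsto\frac{1}{n}\phi u\psi$, verifies that $\Phi$ carries $\End(E_1)[1/n]$ onto $\End(E_2)[1/n]$, and then invokes the characterization of parity by whether $1\in\tr(O[1/n])$ for odd $n$ (Lemmas \ref{DefOdd}(ii) and \ref{DefEven}(ii), packaged as Remark \ref{localN}). You instead pass to the lattice model, reduce $\phi$ to an inclusion $L_1\subset L_2$ of index $n$, deduce $nO_1\subset O_2$ and $nO_2\subset O_1$ from the chain $nL_2\subset L_1\subset L_2$, and read off $\f_1\mid n\f_2$ and $\f_2\mid n\f_1$ from the normal form $O_i=\Z+\f_i\Z\omega$, whence $v_2(\f_1)=v_2(\f_2)$ for odd $n$. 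Each step checks out: the initial reduction is immediate from $\discr(O_i)=\f_i^2\cdot\discr(K)$ (even $\discr(K)$ forces both curves even, and for odd $\discr(K)$ the parity of $E_i$ is the parity of $\f_i$); rescaling $L_1$ by the multiplier of $\phi$ does not change $O_1$; and the coefficient extraction is legitimate since $1,\omega$ are linearly independent over $\Q$. What your route buys is a sharper, quantitative conclusion --- the conductors can differ multiplicatively only at primes dividing $n$ --- obtained by elementary bookkeeping; what the paper's route buys is an intrinsic argument that never chooses a lattice or a basis of $O_K$ and that locates the parity invariant in the trace form $\tr(O[1/n])$, which is the form in which it is reused elsewhere in the paper.
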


\begin{prop}
\label{isogR}
Suppose that $E_1$ and $E_2$ are isogenous  CM elliptic curves  over $\bbC$.  Suppose that there are elliptic curves 
$E_{1,\bbR}$ and $E_{2,\bbR}$ over the field $\bbR$ of real numbers such that there are isomorphisms of complex elliptic curves
$$E_1 \cong E_{1,\bbR} \times_{\bbR}\bbC, \quad E_2=E_{2,\bbR} \times_{\bbR}\bbC.$$
Then $E_{1,\bbR}$ and $E_{2,\bbR}$  are isogenous over $\bbR$.
\end{prop}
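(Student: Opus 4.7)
\medskip

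\noindent\textbf{Proof plan for Proposition \ref{isogR}.}

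The plan is to work analytically, identify the $\Gal(\C/\R)$-action on $\Hom(E_1,E_2)$, and explicitly exhibit a nonzero fixed isogeny. Let $E_{i,\R}$ correspond to a lattice $\Lambda_i\subset \C$ with $\overline{\Lambda_i}=\Lambda_i$, so that $E_{i,\R}(\C)=\C/\Lambda_i$ and complex conjugation on $E_{i,\R}(\C)$ is induced by $z\mapsto\bar z$. Under the analytic identification
\[
\Hom(E_1,E_2)=\{\alpha\in\C\;:\;\alpha\Lambda_1\subset\Lambda_2\},
\]
the Galois group $\Gal(\C/\R)=\{1,c\}$ acts on $\Hom(E_1,E_2)$ by $\alpha\mapsto\bar\alpha$ (this uses $\overline{\Lambda_i}=\Lambda_i$ and a direct computation with the conjugation $c\circ(\alpha\cdot)\circ c$). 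The fixed subgroup is exactly $\Hom_\R(E_{1,\R},E_{2,\R})$, so the task reduces to producing a nonzero real element of $\Hom(E_1,E_2)$.

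Next I would exploit the CM structure. Since $E_1\sim E_2$ over $\C$, we have $K:=\End^0(E_1)=\End^0(E_2)$ and $\Hom^0(E_1,E_2):=\Hom(E_1,E_2)\otimes\Q$ is a one-dimensional $K$-vector space. Because $E_{2,\R}$ is defined over $\R$ and $K$ is imaginary quadratic, the analytic description shows $c$ acts on $K\subset\C$ as the nontrivial element of $\Gal(K/\Q)$; so $c$ acts on $\Hom^0(E_1,E_2)$ in a way that is semilinear with respect to this nontrivial automorphism. By Hilbert~90 for the quadratic extension $K/\Q$, any such semilinear involution on a one-dimensional $K$-vector space has a nonzero fixed vector. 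Clearing the denominator gives a nonzero $c$-fixed element of $\Hom(E_1,E_2)$, i.e.\ an isogeny defined over $\R$.

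If one prefers to avoid invoking Hilbert 90 by name, here is the direct construction that I would write out. Pick any nonzero $\alpha\in\Hom(E_1,E_2)$ and set $\beta_+=\alpha+\bar\alpha$, $\beta_-=\alpha-\bar\alpha$; both lie in $\Hom(E_1,E_2)$ (since $\bar\alpha\Lambda_1=\bar\alpha\,\overline{\Lambda_1}=\overline{\alpha\Lambda_1}\subset\overline{\Lambda_2}=\Lambda_2$). Choose any $\tau\in\End(E_2)\setminus\Z$ and set $\omega=2\tau-\tr(\tau)$; this $\omega$ lies in $\End(E_2)$, is nonzero, and satisfies $\bar\omega=-\omega$ because $K$ is imaginary quadratic. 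Then $\beta_+$ and $\omega\beta_-$ are both $c$-fixed elements of $\Hom(E_1,E_2)$, and they cannot both vanish (otherwise $\alpha+\bar\alpha=\alpha-\bar\alpha=0$, forcing $\alpha=0$). The nonzero one is the required $\R$-isogeny.

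The main (very mild) obstacle is step two: verifying that $c$ really does act on $K=\End^0(E_1)$ as complex conjugation and that the analytic identification of $\Hom(E_1,E_2)$ respects the Galois action. Once these compatibilities are nailed down, the production of the fixed element is essentially forced by a rank-one Hilbert~90 (or by the explicit $\beta_+$, $\omega\beta_-$ trick).
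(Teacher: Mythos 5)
Your proposal is correct and is essentially the paper's own argument: both proofs extract a nonzero ``purely imaginary'' (anti-invariant) endomorphism from the CM structure and multiply it against the anti-invariant part of a chosen homomorphism $E_1\to E_2$ to produce a nonzero conjugation-fixed element of $\Hom(E_1,E_2)$, which then descends to an $\R$-isogeny. The only cosmetic differences are that you handle the two cases uniformly via the pair $\beta_+$, $\omega\beta_-$ and build $\omega=2\tau-\tr(\tau)$ explicitly, whereas the paper splits into the cases $\bar\phi=\phi$ and $\bar\phi\ne\phi$ and obtains its anti-invariant endomorphism $\beta=\bar\alpha-\alpha$ by citing Lemma \ref{CRCM} ($\End_{\R}(E_{\R})=\Z$).
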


\begin{rem}
\label{defR}
Let $E$ be an elliptic curve over $\bbC$. It is well known  \cite[Ch. 3, Sect. 2, Prop. 3.7]{Knapp} that $j(E) \in \bbR$ if and only if $E$ may be defined over $\bbR$, i.e.,
there is an elliptic curve $E_{\bbR}$ over $\bbR$ such that $E \cong E_{\bbR} \times_{\bbR}\bbC$. (See also \cite[Appendix A, Prop. 1.2(b)]{Sil1} and \cite[Ch. 4, Sect. 4.1]{Shi}).)
\end{rem}

\begin{exs}
\label{ExOE}
\begin{itemize}
\item[(e)]
If $E$ is an elliptic curve $y^2=x^3-x$ then it is well known 
(e.g., see \cite[Ch. III, Sect. 4, Example 4.4]{Sil1}) that the order $\End(E)=\bbZ[\sqrt{-1}]$ has even discriminant $-4$ and therefore $E$ is {\sl even}.
\item[(o)]
If $E$ is an elliptic curve $y^2=x^3-1$  then it is  easy to check that $E$ admits an automorphism of order 3.
This implies (see \cite[Ch. I, Sect. 5, p. 20]{LangEF}) the well known assertion that  
the order $\End(E)=\bbZ\left[\frac{-1+\sqrt{-3}}{2}\right]$ has odd discriminant $-3$ and therefore $E$ is 
{\sl odd}.
\end{itemize}
\end{exs}

\section{$j$-invariants of CM elliptic curves}
\label{Jinvariants}
In what follows, $E$ is an elliptic curve  over  $\bbC$. 
 %Clearly, if $E$ has CM then every elliptic curve $E^{\prime}$ over $\bbC$ that is isogenous to $E$ is also of CM type.
The following assertion will be proven in Section \ref{halfPlane}.

\begin{prop}
\label{orbit}
%Let  $E$ be an elliptic curve with CM over $\bbC$.
 Let us consider
the subset $J_{E, \mathrm{is}}$ of all $j(E^{\prime})$ where $E^{\prime}$ runs through the set of  elliptic 
curves over $\bbC$ such that there exists an isogeny $E \to E^{\prime}$ of odd degree.

Then  $J_{E, \mathrm{is}}$ is dense in $\bbC$.
\end{prop}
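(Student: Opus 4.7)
The plan is to realize the required odd-degree isogenies by an explicit sublattice construction in the upper half-plane and then prove density by an elementary Diophantine approximation. Write $E = E_\tau := \C/(\Z + \Z\tau)$ for some $\tau$ in the complex upper half-plane $\mathbb{H}$. Since the modular function $j \colon \mathbb{H} \to \C$ is continuous and surjective, it is enough to show that for every $\tau_0 \in \mathbb{H}$ there is a sequence $\tau'_k \to \tau_0$ in $\mathbb{H}$ such that each $E_{\tau'_k}$ is the target of some odd-degree isogeny from $E_\tau$; continuity of $j$ then brings $j(\tau'_k)$ arbitrarily close to the chosen $j(\tau_0)$, and $j(\tau_0)$ can be an arbitrary complex number.

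To produce a large supply of such points, fix odd positive integers $a, d$ and any integer $b$. Then $L := \Z d + \Z(a\tau + b)$ is a sublattice of $\Z + \Z\tau$ of index $ad$ (the evident change-of-basis matrix has determinant $ad$). Multiplication by $1/d$ gives an isomorphism $\C/L \cong E_{\tau'}$ with $\tau' := (a\tau + b)/d \in \mathbb{H}$, and the inclusion $L \subset \Z + \Z\tau$ induces an isogeny $E_{\tau'} \to E_\tau$ of odd degree $ad$; its dual provides an isogeny $E_\tau \to E_{\tau'}$ of the same odd degree $ad$. Hence $j(\tau') \in J_{E,\mathrm{is}}$ for every such triple $(a, d, b)$.

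It therefore suffices to show that $S := \{(a\tau + b)/d : a, d \text{ positive odd integers},\ b \in \Z\}$ is dense in $\mathbb{H}$. Writing $\tau = u + iv$ and $\tau_0 = u_0 + iv_0$ with $v, v_0 > 0$, I would first pick a large odd $d$, then an odd positive integer $a$ nearest to $(v_0/v)d$ (which satisfies $|a - (v_0/v)d| \le 1$ because any real number lies within distance $1$ of some odd integer, consecutive odd integers being $2$ apart), and finally $b \in \Z$ nearest to $(u_0 - (a/d)u)d$ (so that $|b - (u_0 - (a/d)u)d| \le 1/2$). A short calculation then bounds $|(a\tau + b)/d - \tau_0|$ by $\sqrt{v^2 + 1/4}/d$, which is arbitrarily small. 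The only genuine subtlety is arranging for $a$ and $d$ to be simultaneously odd, which boils down to the elementary observation that ratios of positive odd integers are dense in $(0, \infty)$; everything else is routine bookkeeping. Notably, this argument does not use any CM hypothesis on $E$, in agreement with the statement of the proposition.
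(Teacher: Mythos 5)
Your proof is correct, and it reaches the conclusion by a genuinely more elementary route than the paper's. The shared skeleton is the same: realize $E$ as $\C/(\Z+\Z\tau)$, produce a dense set of $\tau'\in\mathfrak{H}$ linked to $\tau$ by an odd-degree isogeny, and finish with the continuity and surjectivity of $j$. The paper obtains the dense set by letting the whole group $G=\GL(2,\Z_{(2)})^{+}$ act by fractional-linear transformations: it invokes weak approximation for $\Q$ at the places $\{\infty,2\}$ (Artin--Whaples) together with the transitivity of $\GL(2,\R)^{+}$ on $\mathfrak{H}$ to show each orbit $G\tau$ is dense, and then applies Lemma \ref{oddG} to convert each $M\in G$ into an odd-degree isogeny. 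You instead restrict to the triangular substitutions $\tau\mapsto(a\tau+b)/d$ with $a,d$ odd (a subfamily of $G$) and prove density of $\{(a\tau+b)/d\}$ by hand, via a nearest-odd-integer and nearest-integer choice with the explicit bound $\sqrt{v^{2}+1/4}/d$; your sublattice computation ($L=\Z d+\Z(a\tau+b)$ of index $ad$, rescaled by $1/d$) is exactly the content of Lemma \ref{oddG} specialized to triangular matrices. Your explicit use of the dual isogeny to reverse $\mathcal{E}_{\tau'}\to\mathcal{E}_{\tau}$ into an isogeny $E\to E'$ of the same odd degree is a point the paper leaves implicit, and is worth making, since the proposition asks for isogenies out of $E$. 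What your approach buys is self-containedness: no appeal to weak approximation or to the transitivity of $\GL_2(\R)^{+}$. What the paper's buys is the slightly stronger fact that the full $G$-orbit of $\tau$ is dense, which is more than this proposition needs. Both arguments correctly note that no CM hypothesis on $E$ is used.
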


\begin{rem}
If $E$ has CM then
all $E^{\prime}$ that appear in Proposition \ref{orbit} are elliptic curves with CM
and the corresponding imaginary quadratic fields $\End^{0}(E^{\prime})$ are isomorphic to $\End^0(E)$.
\end{rem}

Notice that in light of Examples \ref{ExOE}, there exist an odd CM curve and an even CM curve. Now,
combining Proposition \ref{orbit} with Proposition \ref{isogPar}, we obtain the following statement.

\begin{cor}
\label{orbitPar}
Let $J^{\mathrm{odd}}(\bbC)\subset \bbC$ be the set of all $j(E^{\mathrm{odd}})$ where $E^{\mathrm{odd}}$ runs through the set
of all odd elliptic curves over $\bbC$ with CM.

Let $J^{\mathrm{ev}}(\bbC)\subset \bbC$ be the set of all $j(E^{\mathrm{ev}})$ where $E^{\mathrm{ev}}$ runs through the set
of all  even elliptic curves  over $\bbC$ with CM.

Then each of two subsets $J^{\mathrm{odd}}(\bbC)$ and $J^{\mathrm{ev}}(\bbC)$ is dense in $\bbC$.
\end{cor}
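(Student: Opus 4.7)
The plan is to observe that this corollary is an essentially immediate combination of the three previously stated ingredients: Examples~\ref{ExOE}, Proposition~\ref{orbit}, and Proposition~\ref{isogPar}. There is no real obstacle; the work is in choosing the right starting curve for each density claim.

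First I would handle the odd case. Take $E^{\mathrm{odd}}$ to be the curve $y^2 = x^3 - 1$ from Examples~\ref{ExOE}(o), which is an odd CM elliptic curve. Apply Proposition~\ref{orbit} to $E = E^{\mathrm{odd}}$: the set $J_{E^{\mathrm{odd}},\mathrm{is}}$ of $j$-invariants of elliptic curves $E'$ admitting an odd-degree isogeny $E^{\mathrm{odd}} \to E'$ is dense in $\C$. By the remark immediately following Proposition~\ref{orbit}, every such $E'$ is itself a CM elliptic curve (with $\End^0(E') \cong \End^0(E^{\mathrm{odd}})$). Now Proposition~\ref{isogPar} applied to the odd-degree isogeny $E^{\mathrm{odd}} \to E'$ shows that $E'$ has the same parity as $E^{\mathrm{odd}}$, hence $E'$ is odd. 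Therefore $J_{E^{\mathrm{odd}},\mathrm{is}} \subset J^{\mathrm{odd}}(\C)$, and the density of $J^{\mathrm{odd}}(\C)$ in $\C$ follows from that of $J_{E^{\mathrm{odd}},\mathrm{is}}$.

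For the even case I would repeat the same argument verbatim, but starting from $E^{\mathrm{ev}}$ the curve $y^2 = x^3 - x$ from Examples~\ref{ExOE}(e), which is an even CM curve. The same chain of reasoning yields $J_{E^{\mathrm{ev}},\mathrm{is}} \subset J^{\mathrm{ev}}(\C)$, and density of the latter follows.

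Since no step of this argument is subtle — the two Propositions do all the heavy lifting — there is no real obstacle; the only thing one needs to be careful about is to explicitly invoke the remark after Proposition~\ref{orbit} to know that the isogenous curves $E'$ are CM (so that Proposition~\ref{isogPar}, which is stated only for CM curves, applies).
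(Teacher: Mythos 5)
Your argument is correct and is exactly the paper's own route: the paper derives the corollary by noting (via Examples~\ref{ExOE}) that an odd and an even CM curve exist, and then combining Proposition~\ref{orbit} with Proposition~\ref{isogPar}. Your additional care in invoking the remark after Proposition~\ref{orbit} to ensure the isogenous curves are CM is a welcome precision but does not change the substance.
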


Our main result is the following assertion.

\begin{thm}
\label{orbitR}
 Suppose that  $E$ is a CM curve with $j(E)\in \bbR$. Let us   consider the set $\mathcal{J}(\bbR,E)$  of all $j(E^{\prime})$ where $E^{\prime}$ is any elliptic curve  over $\bbC$ 
 that enjoys the following properties.
 \begin{itemize}
 \item
 $E^{\prime}$ is isogenous 
 to $E$;
 \item
  $j(E^{\prime})\in \bbR$;
  \item
   $E^{\prime}$ has the same parity as $E$.
 \end{itemize}
 Then the closure of $\mathcal{J}(\bbR,E)$ in $\bbR$  is the closed semi-infinite interval $(-\infty,1728]$  (resp. the whole $\bbR$) if $E$ is odd (resp. even). 
\end{thm}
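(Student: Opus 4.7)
Set $K = \End^0(E)$, an imaginary quadratic field with discriminant $D_K$; $E$ is odd iff $D_K$ is odd. Every $E'$ isogenous to $E$ has the form $\C/(\Z + \Z\tau)$ for some $\tau \in K \cap \mathfrak{h}$, and if $a\tau^2 + b\tau + c = 0$ is the primitive minimal polynomial of $\tau$ with $a > 0$, then $\End(\C/(\Z+\Z\tau)) = \Z + \Z(a\tau)$ has discriminant $b^2 - 4ac$, so its parity matches that of $b$.

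\emph{Odd case, upper bound.} Normalizing $\tau$ to lie in the standard $\SL_2(\Z)$-fundamental domain $\mathcal{F}$, the condition $j(\tau) \in \R$ places $\tau$ on $\partial \mathcal{F}$: either the imaginary axis, one of the vertical segments $\Re\tau = \pm 1/2$, $|\tau| \geq 1$, or the unit arc $|\tau| = 1$. The imaginary axis forces $b = 0$ (even), contradicting $E'$ odd; the other two possibilities yield $j(\tau) \in (-\infty, 0]$ and $[0, 1728]$ respectively. Hence $\mathcal{J}(\R, E) \subseteq (-\infty, 1728]$.

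\emph{Odd case, density.} Consider $\tau_\beta := \tfrac{1}{2} + i\beta\sqrt{|D_K|} \in K \cap \mathfrak{h}$ for $\beta \in \Q_{>0}$; all have real $j$-invariant. A direct computation---using $|D_K| \equiv 3 \pmod 4$ to see that $w^2 + u^2|D_K|$ is divisible by exactly $4$---shows that if $\beta = u/(2w)$ with $u, w$ odd coprime positive integers and $\gcd(w, |D_K|) = 1$, then the primitive minimal polynomial is $w^2 X^2 - w^2 X + (w^2 + u^2|D_K|)/4$ with odd middle coefficient and discriminant $-(uw)^2|D_K|$, so the order is odd. Such $\beta$ are dense in $\R_{>0}$ by elementary approximation (take $w$ a large prime coprime to $2|D_K|$ and $u$ the nearest odd integer to $2rw$), hence $t := \beta\sqrt{|D_K|}$ is dense in $(0, \infty)$. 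Reduction into $\mathcal{F}$ shows the continuous map $t \mapsto j(\tfrac{1}{2} + it)$ has image $(-\infty, 1728]$: for $t \geq \sqrt{3}/2$ the point $\tfrac{1}{2}+it$ already lies on the vertical boundary of $\mathcal{F}$ (image $(-\infty, 0]$); for $t \in [\sqrt{3}/6, \sqrt{3}/2]$, $TS(\tfrac{1}{2}+it)$ lies on the unit arc (image $[0, 1728]$, attaining $1728$ at $t = 1/2$); and for $t \in (0, \sqrt{3}/6]$, $ST^2S(\tfrac{1}{2}+it) = -\tfrac{1}{2} + i/(4t)$ lies on the opposite vertical boundary (image $(-\infty, 0]$). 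Continuity of $j$ then gives density of $\{j(\tau_\beta)\}$ in $(-\infty, 1728]$.

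\emph{Even case and main obstacle.} No upper bound is needed. The vertical-line family with $\beta = u/v$ (coprime positive integers, $v$ odd and coprime to $|D_K|$) has primitive form $(4v^2, -4v^2, v^2 + 4u^2|D_K|)$ with even middle coefficient, hence even order, and yields density on $(-\infty, 1728]$ by the same argument. The imaginary-axis family $\tau = i\beta\sqrt{|D_K|}$, $\beta \in \Q_{>0}$, always has $b = 0$ and hence even order; the analogous continuous map $y \mapsto j(iy)$ sends $(0,\infty)$ onto $[1728, \infty)$, yielding density on $[1728, \infty)$; the two dense sets together fill $\R$. The principal technical work is the case-by-case parity analysis of the primitive minimal polynomial of $\tau_\beta$, splitting on the $2$-adic valuation of the denominator of $\beta$ and on $\gcd(v, |D_K|)$ to pin down which parameters give odd (resp.\ even) orders. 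The remaining ingredients---density of the parameter sets, continuity of $j$, and the image computations for $t \mapsto j(\tfrac{1}{2}+it)$ and $y \mapsto j(iy)$ via $\SL_2(\Z)$-reduction---are elementary.
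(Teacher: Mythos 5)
Your proposal is correct, and while its overall architecture mirrors the paper's (restrict to $\mathrm{Re}(\tau)\in\{0,1/2\}$ via the real locus of $j$, exhibit dense families of $\tau$ of the right parity on the line $\mathrm{Re}(\tau)=1/2$ and on the imaginary axis, and compute the images of $j$ on these loci), the mechanism by which you certify the parity of each family member is genuinely different. The paper fixes a single base point whose parity is known, produces the dense family $\tau_{m,n}$ by applying matrices in $\GL(2,\Z_{(2)})^{+}$, and invokes parity preservation under odd-degree isogenies (Proposition \ref{isogPar}, via the trace criterion $\tr(O[1/n])\ni 1$ of Lemma \ref{DefOdd}); the even case with $d\equiv 1\pmod 4$ then needs the separate contradiction argument of Corollary \ref{evenE}. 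You instead read off the parity of $\End(\mathcal{E}_\tau)$ pointwise from the primitive integral quadratic equation $a\tau^2+b\tau+c=0$, using $O_\tau=\Z+\Z a\tau$ and $\discr=b^2-4ac$ --- exactly the computation the paper performs only once, in Key Lemma \ref{realDIV}(iii) --- so that Proposition \ref{isogPar} and Corollary \ref{evenE} are not needed at all. The price is the case-by-case $2$-adic bookkeeping on the denominator of $\beta$ (your congruence $w^2+u^2|D_K|\equiv 4\pmod 8$ checks out, as do the primitivity claims under $\gcd(w,|D_K|)=1$); the payoff is a more self-contained argument, and as a byproduct you get the exact discriminants $(uw)^2D_K$ of the orders realized, which the paper only extracts later in Section \ref{revisit}. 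Your explicit $\SL_2(\Z)$-reduction of the line $\mathrm{Re}(\tau)=1/2$ (the identities $|TS(\tfrac12+it)|=1$ and $ST^2S(\tfrac12+it)=-\tfrac12+i/(4t)$ are correct) replaces the paper's citation of the monotone bijection $f:[1/2,\infty)\to(-\infty,1728]$; both yield the same image computation. One cosmetic point: the imaginary axis is not literally part of $\partial\mathcal{F}$, so you should phrase the normalization as ``$j^{-1}(\R)\cap\mathcal{F}$ is the union of the imaginary axis, the two vertical boundary segments, and the unit arc,'' which is the standard fact the paper also relies on.
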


We prove Theorem \ref{orbitR} in Section \ref{mainproof}. Our proof is based on auxiliary results about isogenies of complex elliptic curves with CM that will be
discussed in Section \ref{halfPlane}.

\begin{rem}
It follows from Remark \ref{defR} and Proposition \ref{isogR} that Theorem \ref{orbitR}.
is equivalent to the following statement.
\end{rem}

\begin{thm}
\label{orbitRR}
Let $E_{\bbR}$ be an elliptic curve over $\bbR$ such that its complexification $E=E_{\bbR}\times_{\bbR}\bbC$ is of CM type. 
 Let us   consider the set $\mathcal{J}_0(\bbR,E_{\bbR})$  of all $j(E_{\mathbb{R}}^{\prime})$ where $E^{\prime}_{\bbR}$ is any elliptic curve  over $\bbR$ that 
 enjoys the following properties.
 \begin{itemize}
 \item
 $E^{\prime}_{\bbR}$ is $\bbR$-isogenous  to $E_{\bbR}$;
 \item
 The complexification $E^{\prime}:=E^{\prime}_{\bbR}\times_{\bbR}\bbC$ of  $E^{\prime}_{\bbR}$
  has the same parity as $E$.
 \end{itemize}
 Then the closure of $\mathcal{J}_0(\bbR,E_{\bbR})$ in $\bbR$  is the closed semi-infinite interval $(-\infty,1728]$  (resp. the whole $\bbR$) if $E$ is odd (resp. even). 
\end{thm}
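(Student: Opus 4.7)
The plan is to deduce Theorem \ref{orbitRR} from Theorem \ref{orbitR} by showing that the two sets $\mathcal{J}_0(\R,E_{\R})$ and $\mathcal{J}(\R,E)$ coincide as subsets of $\R$; since closure in $\R$ depends only on the underlying set, identical closures will follow. The bridge between the two descriptions is provided by Remark \ref{defR} (a curve over $\C$ descends to $\R$ exactly when $j \in \R$) together with Proposition \ref{isogR} (a $\C$-isogeny between two curves that both descend to $\R$ can itself be descended).

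First I would establish the inclusion $\mathcal{J}_0(\R,E_{\R})\subseteq \mathcal{J}(\R,E)$. Given an elliptic curve $E'_{\R}$ over $\R$ that is $\R$-isogenous to $E_{\R}$, base change to $\C$ produces a $\C$-isogeny between $E'=E'_{\R}\times_{\R}\C$ and $E$. The $j$-invariant of $E'$ equals $j(E'_{\R}) \in \R$, and by the hypothesis defining $\mathcal{J}_0$ the complexification $E'$ already has the same parity as $E$. Hence every $j(E'_{\R})$ in $\mathcal{J}_0(\R,E_{\R})$ satisfies the three bullet conditions defining $\mathcal{J}(\R,E)$.

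Next I would establish the reverse inclusion $\mathcal{J}(\R,E)\subseteq \mathcal{J}_0(\R,E_{\R})$. Let $E'$ be a complex elliptic curve that is isogenous to $E$, has real $j$-invariant, and has the same parity as $E$. Since $E$ is CM, so is $E'$ (isogenous curves share their endomorphism algebra up to isomorphism). By Remark \ref{defR} the condition $j(E')\in\R$ forces the existence of an $\R$-model $E'_{\R}$ with $E'\cong E'_{\R}\times_{\R}\C$. Applying Proposition \ref{isogR} to the pair $(E,E')$ — both of which now descend to $\R$ as $E_{\R}$ and $E'_{\R}$ and are $\C$-isogenous — yields an $\R$-isogeny between $E'_{\R}$ and $E_{\R}$. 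The parity condition in $\mathcal{J}_0$ is stated on the complexification, which is exactly $E'$, so the membership is automatic, and $j(E')=j(E'_{\R})\in \mathcal{J}_0(\R,E_{\R})$.

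With $\mathcal{J}_0(\R,E_{\R})=\mathcal{J}(\R,E)$ in hand, the conclusion on closures is immediate from Theorem \ref{orbitR}. There is no substantive obstacle to this reduction: the only point needing care is the logical structure — the parity of $E'_{\R}$ is \emph{defined} via the complexification, so one must verify that the ``same parity'' clauses in the two theorems match without slippage, which they do tautologically. The genuine analytical content sits inside Theorem \ref{orbitR} and Proposition \ref{isogR}; the present theorem is a repackaging made possible by descent.
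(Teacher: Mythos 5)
Your proposal is correct and follows exactly the route the paper intends: the paper disposes of Theorem \ref{orbitRR} with a one-line remark that it is equivalent to Theorem \ref{orbitR} via Remark \ref{defR} and Proposition \ref{isogR}, and your two-inclusion argument establishing $\mathcal{J}_0(\R,E_{\R})=\mathcal{J}(\R,E)$ is precisely the content of that equivalence, spelled out.
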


\section{The upper half-plane and elliptic curves}
\label{halfPlane}
\begin{defn}
If $\omega_1$ and $\omega_2$ are nonzero complex numbers such that $\omega_1/\omega_2 \not\in \bbR$ then we write
$[\omega_1,\omega_2]$ for the discrete lattice $\bbZ\omega_1+\bbZ\omega_2$ of rank $2$ in $\bbC$.
\end{defn}
Let
$$\mathfrak{H}:=\{x+\mathbf{i}y \mid x,y \in \mathbb{R}; \ y>0\}\subset \bbC$$
be the {\sl upper half-plane}. If $\tau \in \mathfrak{H}$ then we write $\mathcal{E}_{\tau}$ for the complex elliptic curve
such that the complex tori $\mathcal{E}_{\tau}(\bbC)$ and $\bbC/\Lambda_{\tau}$ are isomorphic. Here
$$\Lambda_{\tau}:=[\tau,1]=\bbZ\tau+\bbZ.$$
See \cite[Ch. V, p. 408--411]{Sil} for a Weierstrass equation of $\mathcal{E}_{\tau}$. (In the notation of \cite[Ch. V]{Sil}, our $\mathcal{E}_{\tau}$ is $E_q$ with $q=\exp(2\pi \mathbf{i}\tau)$.)
%is a discrete lattice of rank $2$ in $\bbC$.
\begin{rem}
\label{isogE1E2}
If $\tau_1,\tau_2 \in \mathfrak{H}$ then the group $\Hom(\mathcal{E}_{\tau_1},\mathcal{E}_{\tau_2})$ of homomorphisms from 
$\mathcal{E}_{\tau_1},\mathcal{E}_{\tau_2}$ can be be canonically identified with
\begin{equation}
\label{tau1tau2}
\{u \in \bbC \mid u(\Lambda_{\tau_1})\subset \Lambda_{\tau_2}\}\subset \bbC.
\end{equation}
Each  $u$ that satisfies \eqref{tau1tau2} corresponds to the homomorphism
$$\phi_{u}: \mathcal{E}_{\tau_1} \to \mathcal{E}_{\tau_2}$$ of elliptic curves such that the corresponding action of $\phi_{u}$
 on the complex points is as follows.
$$\mathcal{E}_{\tau_1}(\bbC)=\bbC/\Lambda_{\tau_1} \to \bbC/\Lambda_{\tau_2} =\mathcal{E}_{\tau_2}(\bbC), \quad z+\Lambda_{\tau_1} \mapsto u z+\Lambda_{\tau_2}.$$
If $u \ne 0$ then $\phi_{u}$ is an isogeny, 
whose degree coincides with the index $[\Lambda_{\tau_2}:u(\Lambda_{\tau_1})]$  of the subgroup 
$u(\Lambda_{\tau_1})$ in  $\Lambda_{\tau_2}$ \cite[p. 9]{Gross}. In particular, if 
$$\Lambda_{\tau_2}=u(\Lambda_{\tau_1})$$
(i.e., the index is $1$), $\phi_u$ is an isomorphism of elliptic curves $\mathcal{E}_{\tau_1}$ and $\mathcal{E}_{\tau_2}$.

On the other hand, if we take $\tau_1=\tau_2=\tau$ and put 
\begin{equation}
\label{LambdaO}
O_{\tau}:=\{u \in \bbC\mid u(\Lambda_{\tau})\subset \Lambda_{\tau}\},
\end{equation}
then the map
\begin{equation}
\label{Oend}
O_{\tau} \to \End(\mathcal{E}_{\tau}), \ u \mapsto \phi_u
\end{equation}
is a {\sl ring isomorphism}.
\end{rem}
\begin{rem}
\label{LO}
We have
$$\bbZ\subset O_{\tau}, \quad \Lambda_{\tau}\supset O_{\tau} \cdot 1=O_{\tau}\subset \bbC,$$
i.e., 
\begin{equation}
\label{LO2}
\bbZ\subset O_{\tau}\subset \Lambda_{\tau}\subset \bbC.
\end{equation}
\end{rem}
It follows easily that if $\Lambda_{\tau}$ is a {\sl subring} of $\bbC$ then  $O_{\tau}= \Lambda_{\tau}$.
\begin{rem}
\label{OrderTau}
\begin{itemize}
\item[(i)]
It is well known \cite[Ch. 4, Sect. 4.4, Prop. 4.5]{Shi} that $\mathcal{E}_{\tau}$ has CM if and only if $\tau$ is an (imaginary) quadratic irrationality,
i.e., $\bbQ(\tau)=\bbQ+\bbQ\tau$ is an imaginary quadratic field. If this is the case then 
$O_{\tau}$ is an order in the imaginary quadratic field  $\bbQ(\tau)$; in particular,
\begin{equation}
\label{CMtau}
O_{\tau}=\{u \in \bbQ(\tau)\mid u(\Lambda_{\tau}) \subset \Lambda_{\tau}\}.
\end{equation}

 In addition, extending isomorphism \eqref{Oend} by $\bbQ$-linearity,
we get a canonical isomorphism of $\bbQ$-algebras
$$\bbQ(\tau) =O_{\tau} \otimes \bbQ=\bbQ(\tau) \to \End(\mathcal{E}_{\tau})\otimes\bbQ=\End^0(\mathcal{E}_{\tau}), \quad u\otimes r \mapsto \phi_u \otimes r$$
between $\bbQ(\tau)$ and the endomorphism algebra $\End^0(\mathcal{E}_{\tau})$ of $\mathcal{E}_{\tau}$. 
\item[(ii)]
Suppose that $\mathcal{E}_{\tau}$ has CM, i.e.,  $\bbQ(\tau)$ is an imaginary quadratic field. In light of \eqref{LO2},
\begin{equation}
\label{LO3}
\bbZ\subset O_{\tau}\subset \Lambda_{\tau}=\bbZ\tau+\bbZ\subset \bbQ(\tau).
\end{equation}
It follows from Remark \ref{LO} that if $ \Lambda_{\tau}$ is an {\sl order} in $\bbQ(\tau)$ then
$$O_{\tau}=\Lambda_{\tau}.$$
%i.e., the ring $\End(\mathcal{E}_{\tau})$ is isomorphic to the order $O_{\tau}$  of the imaginary quadratic field $\bbQ(\tau)$.
\item[(ii)]
Let $\mathcal{E}_{\tau_1}$ and $\mathcal{E}_{\tau_2}$ be two elliptic curves with complex multiplication. Then $\mathcal{E}_{\tau_1}$ and $\mathcal{E}_{\tau_2}$
are isogenous if and only if the corresponding imaginary quadratic fields coincide, i.e.,
$$\bbQ(\tau_1)=\bbQ(\tau_2)$$
(see \cite[Ch. 4, Sect. 4.4, Prop. 4.9]{Shi}).
\end{itemize}
\end{rem}

The group $\GLGL_2(\mathbb{R})^{+}$ of two-by-two real matrices with positive determinant acts transitively on $\mathfrak{H}$
by fractional-linear transformations. Namely, the continuous map
$$M, \tau \mapsto M(\tau)=\frac{a\tau+b}{c\tau+d} \quad \forall   \tau \in \mathfrak{H}, \  M=
\begin{pmatrix}
a & b\\
c & d
\end{pmatrix}
\in \GLGL_2(\mathbb{R})^{+}.$$
defines the transitive action of $\GLGL_2(\mathbb{R})^{+}$ on  $\mathfrak{H}$.
(Actually, even the action of the subgroup $\mathrm{SL}(2,\bbR)$ on  $\mathfrak{H}$ is transitive.)  We will mostly deal with the action of  the subgroup
$$G:=\GLGL(2,\bbZ_{(2)})^{+}=\{M \in \mathrm{GL}(2,\bbZ_{(2)})\mid \det(M)>0\}\subset \mathrm{GL}_2(\mathbb{R})^{+}.$$

\begin{lem}
\label{oddG}

Suppose that  a matrix
$$M=\begin{pmatrix}
a & b\\
c & d
\end{pmatrix}
\in \GLGL_2(\bbZ_{(2)})$$ has positive determinant 
$$\det(M)=ad-bc\in \bbZ_{(2)}^{*},$$
i.e, $M \in G$.
Then there exists an isogeny $\mathcal{E}_{M(\tau)} \to \mathcal{E}_{\tau}$ of odd degree.
\end{lem}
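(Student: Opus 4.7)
The plan is to reduce the statement to the well-known integer-entry case by exploiting the fact that the Möbius action of $M$ on $\mathfrak{H}$ is insensitive to scaling $M$ by any nonzero scalar: replacing $M$ by $nM$ yields the same fractional-linear transformation of $\tau$. Since the entries of $M$ lie in $\Z_{(2)}$, their denominators are odd, so there is a positive odd integer $n$ such that $M' := nM$ has entries in $\Z$. Write $M' = \begin{pmatrix} a' & b' \\ c' & d'\end{pmatrix}$ with $a',b',c',d' \in \Z$ and $M'(\tau) = M(\tau)$.

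Next I would verify that $\det(M')$ is a positive odd integer. By multiplicativity of the determinant, $\det(M') = n^2 \det(M)$, which is positive since both $n^2$ and $\det(M)$ are positive. Being a polynomial in the integer entries of $M'$, it is an integer. On the other hand, $\det(M) \in \Z_{(2)}^*$ means $\det(M) = p/q$ in lowest terms with $p$ and $q$ both odd; hence $q \mid n^2 p$, which forces $q \mid n^2$, so $\det(M') = (n^2/q)\, p$ is a product of odd integers. Call this odd positive integer $N$.

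Now I would produce the isogeny directly via the dictionary of Remark \ref{isogE1E2}. Set $\tau' := M(\tau) = M'(\tau) = (a'\tau+b')/(c'\tau+d')$ and put
\begin{equation*}
u := c'\tau + d' \in \C.
\end{equation*}
Since $\tau \notin \R$ and $c',d'$ are real, $u \ne 0$. A direct computation gives
\begin{equation*}
u\cdot\tau' = a'\tau+b' \in \Lambda_\tau, \qquad u\cdot 1 = c'\tau+d' \in \Lambda_\tau,
\end{equation*}
so $u\, \Lambda_{\tau'} \subset \Lambda_\tau$. By Remark \ref{isogE1E2} this yields an isogeny $\phi_u : \mathcal{E}_{M(\tau)} \to \mathcal{E}_\tau$ whose degree equals the index $[\Lambda_\tau : u\,\Lambda_{\tau'}]$. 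Since $u\,\Lambda_{\tau'} = \Z(a'\tau+b') + \Z(c'\tau+d')$ inside $\Lambda_\tau = \Z\tau + \Z$, this index is $|\det M'| = N$, which is odd.

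There is really no serious obstacle here; the only conceptual point to get right is the passage from $M \in \GL_2(\Z_{(2)})^+$ to an integer-entry matrix with odd determinant. The scaling trick must be done with an odd $n$ (which is possible precisely because the denominators of the entries live in $\Z_{(2)}$), and the parity of the resulting determinant must be tracked as above to conclude that the degree of the isogeny, rather than merely being finite, is actually odd.
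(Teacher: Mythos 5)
Your proof is correct and follows essentially the same route as the paper: clear denominators by an odd positive integer $n$ to get an integer matrix defining the same fractional-linear transformation, observe that its determinant is a positive odd integer, and realize the isogeny as $\phi_u$ with $u=c'\tau+d'$, whose degree is the index $[\Lambda_\tau : u\Lambda_{M(\tau)}]=\det M'$. Your verification that $\det(nM)$ is odd is slightly more explicit than the paper's, but the argument is the same.
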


\begin{proof}
By definition of $\bbZ_{(2)}$, there are {\sl integers} $\tilde{a},\tilde{b},\tilde{c}, \tilde{d}$,
and a positive {\sl odd} integer $n$ such that
$$a=\tilde{a}/n, \ b=\tilde{b}/n, \ c=\tilde{c}/n, \ d=\tilde{d}/n.$$
The conditions on $\det(M)$ mean that 
$ad-bc=D_1/D_2$ where both $D_1$ and $D_2$ are {\sl positive odd} integers. This implies that
$$\bbZ \ni \tilde{a}\tilde{d}-\tilde{b}\tilde{c}=(na)(nb)-(nc)(nd)=
n^2(ad-bc)=n^2 \frac{D_1}{D_2}=\frac{n^2 D_1}{D_2}.$$
It follows that $\tilde{a}\tilde{d}-\tilde{b}\tilde{c}$ is a positive integer that divides the odd integer $n^2 D_1$
(recall that both $n$ and $D_1$ are odd).  It follows that  $\tilde{a}\tilde{d}-\tilde{b}\tilde{c}$ is a {\sl positive odd} integer.
Then
$$M(\tau)=\frac{a\tau+b}{c\tau+d}=\frac{\tilde{a}\tau+\tilde{b}}{\tilde{c}\tau+\tilde{d}}=\tilde{M}(\tau)$$
where the matrix
$$\tilde{M}=\begin{pmatrix}
\tilde{a} & \tilde{b}\\
\tilde{c} & \tilde{d}
\end{pmatrix}
.$$
(Since 
$$\tilde{a}\tilde{d}-\tilde{b}\tilde{c}=\det(\tilde{M})=\frac{1}{n^2}\det(M)>0,$$  we get 
$$\tilde{M} \in \GLGL(2,\bbZ_{(2)})^{+}\subset \GLGL_2(\bbR)^{+}.)$$
We have
$$\Lambda_{M(\tau)}=\Lambda_{\tilde{M}(\tau)}=\left[\frac{\tilde{a}\tau+\tilde{b}}{\tilde{c}\tau+\tilde{d}},\ 1\right].$$
This means that
$$(\tilde{c}\tau+\tilde{d})\Lambda_{M(\tau)}=[\tilde{a}\tau+\tilde{b}, \ \tilde{c}\tau+\tilde{d}]$$
is a subgroup in $[\tau,1]=\Lambda_{\tau}$ of odd index $\tilde{a}\tilde{d}-\tilde{b}\tilde{c}$.
This implies that if we put 
$$u:=\tilde{c}\tau+\tilde{d}\in \bbC, \quad  k:=\tilde{a}\tilde{d}-\tilde{b}\tilde{c}$$ then
$u\left(\Lambda_{M(\tau)}\right)$ is a subgroup of {\sl odd} index $k$ in $\Lambda_{\tau}$. This gives us the isogeny
$\phi_u: \mathcal{E}_{M(\tau)} \to \mathcal{E}_{\tau}$ of odd degree $k$. This ends the proof.
\end{proof}

\begin{proof}[Proof of Proposition \ref{orbit}]
Let $B \in\GLGL(2,\bbR)^{+}$. The  {\sl weak approximation} for the field $\bbQ$ \cite[Th. 1]{Artin} with respect to places $\{\infty,\ 2\}$
implies the existence of a sequence $\{B_n\}$ of $2 \times 2$ matrices with rational entries such that
$\{B_n\}$ converges to $B$ in the real topology and to the identity matrix 
$\begin{pmatrix}
1 & 0\\
0 & 1
\end{pmatrix}
$
in the $2$-adic topology. Removing first finite terms of the sequence, we may and will assume that for all $n$
$$B_n \in \GLGL(2,\bbZ_{(2)})\subset \GLGL(2,\bbQ), \quad \det(B_n)>0;$$
the latter inequality means that all  $B_n \in \GLGL(2,\mathbb{R})^{+}$ and therefore
$$B_n \in  \GLGL(2,\bbZ_{(2)})^{+}=G.$$
It follows that $B(\tau)\in \mathfrak{H}$ is the limit of the sequence 
$$B_n(\tau) \in \mathfrak{H}$$
in the complex topology. Now the transitivity of  $\GLGL(2,\mathbb{R})^{+}$ implies that 
every orbit
$G\tau=\{M(\tau) \mid M \in G\}$ is dense in $\mathcal{H}$
in the complex topology.

Recall that the classical holomorphic (hence, continuous) modular function
$$j:\mathfrak{H} \to \bbC$$
takes on every complex value. It follows that for every $\tau$ the set
$\{j(M(\tau))\mid M \in G\}$ is dense in $\bbC$.

Let $E$ be an elliptic curve with complex multiplication. Then there exists $\tau\in H$ such that
the elliptic curves $E$ and $\mathcal{E}_{\tau}$ are isomorphic and therefore
$$j(E)=j(\mathcal{E}_{\tau})=j(\tau).$$
In light of Lemma \ref{oddG}, if $M \in G$ then there is an 
isogeny $\mathcal{E}_{M(\tau)} \to E$ of odd degree. Since
$$j(\mathcal{E}_{M(\tau)})=j(M(\tau)),$$
the set $J_{E, \mathrm{is}}$ of complex numbers contains 
$\{j(M(\tau))\mid M \in G\}$, which  is dense in $\bbC$. Hence, $J_{E, \mathrm{is}}$ is also dense in $\bbC$,
which ends the proof.
\end{proof}

\begin{ex}
\label{crucialE}
Let $\tau \in \mathfrak{H}$.  Let $n, m$ be  positive {\sl odd} integers.
Then the matrix
$$M=\begin{pmatrix}
m & 0\\
0 & n
\end{pmatrix}
$$
 satisfies the conditions of Lemma \ref{oddG}. We have
$$M(\tau)=\frac{m}{n} \tau.$$
It follows from Lemma \ref{oddG} that there exists an isogeny
$\mathcal{E}_{\frac{m}{n} \tau} \to \mathcal{E}_{\tau}$
 of {\sl odd} degree. 
 This implies that  if $\tau$ is an imaginary quadratic irrationality then, by Proposition 1.4 (to be proven in Section \ref{OddEvenE}),
  the CM elliptic curves $\mathcal{E}_{\frac{m}{n}\tau}$ and $\mathcal{E}_{\tau}$ have the same parity.
\end{ex}

\begin{ex}
\label{crucial}
Let $\tau=\frac{1+\mathbf{i}y}{2} \in \mathfrak{H}$ where $y \in \bbR_{+}$. Let $n, m$ be  positive odd integers. Let us put
$$\tau_{m,n}:=\frac{1+\mathbf{i}\frac{m}{n}y}{2} \in \mathfrak{H}.$$
Notice that $n-m$ is even and therefore  $\frac{n-m}{2}\in \bbZ$.
Then the matrix
$$M=\begin{pmatrix}
m & \frac{n-m}{2}\\
0 & n
\end{pmatrix}
$$
 satisfies the conditions of Lemma \ref{oddG}. We have
$$M(\tau)=
m\cdot \frac{1+\mathbf{i}y}{2n}+\frac{n-m}{2n}=
\frac{n}{2n}+m\cdot \frac{\mathbf{i}y}{2n}=\frac{1+\mathbf{i}\frac{m}{n}y}{2}=
\tau_{m,n}.$$
It follows from Lemma \ref{oddG} that there exists an isogeny
$\mathcal{E}_{\tau_{m,n}} \to \mathcal{E}_{\tau}$
 of {\sl odd} degree. 
  By Proposition 1.4 (to be proven in Section \ref{OddEvenE}),  if $\tau$ is an imaginary quadratic irrationality then the CM elliptic curves $\mathcal{E}_{\tau_{m,n}}$ and $\mathcal{E}_{\tau}$ have the same parity.
\end{ex}

In what follows, if $D$ is a negative real number then we write $\sqrt{D}$ for $\sqrt{|D|}\cdot \mathbf{i}\in \mathfrak{H}$.

\begin{ex}
\label{EoddD}

If $\tau$ is a point in  $\mathfrak{H}$ such that $\bbQ(\tau)$ is an imaginary quadratic field and the lattice $\Lambda_{\tau}$ is an {\sl order} in $\bbQ(\tau)$
then $O_{\tau}=\Lambda_{\tau}$.  E.g., let  $D$ be a negative integer that is congruent to $1$ modulo $4$ and
$$\tau=\frac{1+\sqrt{D}}{2} \in \mathfrak{H}.$$
Then
$\Lambda_{\tau}=\bbZ+\bbZ \frac{1+\sqrt{D}}{2}$ is an order in the imaginary quadratic field $\bbQ(\tau)=\bbQ(\sqrt{D})$.
By Remark \ref{OrderTau}(ii),
$$O_{\tau}=\Lambda_{\tau}=\bbZ+\bbZ \frac{1+\sqrt{D}}{2}.$$
It follows from Definition \ref{DefOd} that
$O_{\tau}$
 is an {\sl odd}  order in the quadratic field $\bbQ(\sqrt{D})=\bbQ(\tau) \cong \End^0(\mathcal{E}_{\tau})$. This means that the elliptic curve $\mathcal{E}_{\tau}$ is odd.
 
 Now let $n, m$ be any  positive odd integers. Let us consider the complex numbers
 \begin{equation}
 \label{tauMND}
\tau_{m,n}(D)=\frac{1+\mathbf{i}\frac{m}{n}\sqrt{|D|}}{2} =\frac{1+\frac{m}{n}\sqrt{D}}{2}\in \mathfrak{H}.
\end{equation}
It follows from Example \ref{crucial} (applied to $y=\sqrt{|D|}$) that all the elliptic curves $\mathcal{E}_{\tau_{m,n}(D)}$ are also odd.
\end{ex}

\begin{lem}[Key Lemma]
\label{realDIV}
 Let $\tau \in \mathfrak{H}$ such that  $\bbQ(\tau)$ is an imaginary
quadratic field, i.e., $\mathcal{E}_{\tau}$  is an  elliptic curve with CM.

\begin{itemize}
\item[(i)]
If $\mathrm{Re}(\tau)=0$ then $\mathcal{E}_{\tau}$  is even.
\item[(ii)]
 Suppose that $\mathrm{Re}(\tau)=1/2$.  Let $D$ be a negative integer such that 
 $\bbQ(\tau)=\bbQ(\sqrt{D})$ and $D \equiv 1 (\bmod 4)$. 
 Then
 \begin{equation}
 \label{DtauO}
 \frac{1+\sqrt{D}}{2} \in O_{\tau}
 \end{equation} 
 if and only if there is a positive odd integer $\beta$ such that
$\beta$ divides $D$ and
$$\tau=1/2+\frac{1}{2\beta}\sqrt{D}=\frac{1+\sqrt{D}/\beta}{2}=\tau_{1,\beta}(D).$$
If this is the case then $\mathcal{E}_{\tau}$ is odd.
\item[(iii)]
Let $D$ be a negative integer such that 
$D \equiv 1 (\bmod 4)$. Let $\beta$ be a positive odd integer and
$\mathbf{d}=\mathbf{d}_{\beta,D}\ge 1$ be the greatest common divisor of $D$ and $\beta^2$.
 Let us put
$$\tau:=\tau_{1,\beta}(D)=\frac{1+\frac{1}{\beta}\sqrt{D}}{2}
=\frac{\beta+\sqrt{D}}{2\beta}\in \mathfrak{H}.$$

Then $\mathcal{E}_{\tau}$ is odd,
$\bbQ(\tau)=\bbQ(\sqrt{D})$ is an imaginary quadratic field and the order
$O_{\tau}$   coincides with
$$\bbZ+\bbZ \left(\frac{\beta}{\mathbf{d}}\right) \left(\frac{\beta+\sqrt{D}}{2\beta}\right)$$
and has discriminant
$$\left(\frac{\beta}{\mathbf{d}}\right)^2 D=\frac{\beta^2}{\mathbf{d}} \cdot  \frac{D}{\mathbf{d}}.$$
\end{itemize}
\end{lem}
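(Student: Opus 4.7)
The strategy for all three parts is to unpack the defining relation $O_\tau=\{u\in\Q(\tau)\mid u\Lambda_\tau\subseteq\Lambda_\tau\}$ from \eqref{CMtau} by writing a candidate $u$ in a convenient basis and translating $u\Lambda_\tau\subseteq\Lambda_\tau$ (equivalently, $u\in\Lambda_\tau$ and $u\tau\in\Lambda_\tau$) into explicit linear and divisibility conditions on its coordinates.

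For (i), I would exploit that $\mathrm{Re}(\tau)=0$ gives $\bar\tau=-\tau$. By \eqref{LO2} every $u\in O_\tau$ has the form $a+b\tau$ with $a,b\in\Z$, so $\tr(u)=u+\bar u=2a\in 2\Z$; combined with $\tr(\Z)=2\Z$ this forces $\tr(O_\tau)=2\Z$, and Definition-Lemma \ref{DefEv}(i) identifies $O_\tau$ as even.

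For (ii), I would first write $\tau=1/2+r\sqrt{D}$ with $r\in\Q_{>0}$, which is possible since $\mathrm{Re}(\tau)=1/2$ and $\tau\in\Q(\sqrt{D})$. Expanding $(1+\sqrt{D})/2=a+b\tau$ in integers forces $b$ to be a positive odd integer $\beta$, so $r=1/(2\beta)$ and $\tau=\tau_{1,\beta}(D)$. The remaining condition $(1+\sqrt{D})/2\cdot\tau\in\Lambda_\tau$ then yields, after a short computation using $D\equiv\beta^2\equiv 1\pmod 4$, precisely the divisibility $\beta\mid D$; the converse direction is the same algebra run backwards. With $(1+\sqrt{D})/2\in O_\tau$ and $D$ not a square, oddness of $O_\tau$ is immediate from Definition-Lemma \ref{DefOd}(iv).

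For (iii), $\Q(\tau)=\Q(\sqrt{D})$ is clear from $\sqrt{D}=2\beta\tau-\beta$. To compute $O_\tau$ I would write a general $u=p+q\sqrt{D}$ and extract from $u,u\tau\in\Lambda_\tau$ the conditions $p\pm q\beta\in\Z$ together with $q(D-\beta^2)/(2\beta)\in\Z$. Setting $s:=2q\beta\in\Z$, writing $\mathbf{d}=\gcd(D,\beta^2)$ and $D=\mathbf{d}D_1$, $\beta^2=\mathbf{d}m$ with $\gcd(D_1,m)=1$, coprimality reduces the divisibility to $m\mid s$, while the congruence $D_1\equiv m\pmod 4$ (both are $\equiv\mathbf{d}^{-1}\pmod 4$ since $D,\beta^2\equiv 1\pmod 4$) makes the residual mod-$4$ condition automatic. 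Reading off a generator $\omega$ of the resulting rank-$2$ lattice and computing $(\omega-\bar\omega)^2$ then gives the claimed discriminant $(\beta/\mathbf{d})^2 D$, which is an odd integer because $\beta$, $D$, $\mathbf{d}$ all are; hence $\mathcal{E}_\tau$ is odd by Definition-Lemma \ref{DefOd}(ii). The hard part will be this coupled gcd/mod-$4$ analysis: the decisive observation is that $D_1$ and $m$ are coprime yet share the same residue mod $4$, which is precisely what allows the single divisibility condition to split cleanly.
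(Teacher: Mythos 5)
Your proposal is correct, and it departs from the paper's proof in ways worth noting, mostly in the direction of being more self-contained. For (i) the paper argues by contradiction (it assumes $O_{\tau}$ is odd, extracts $(1+\sqrt{D})/2\in O_{\tau}$ via Definition-Lemma \ref{DefOd}, and derives $\alpha=1/2\notin\Z$); your direct observation that $\tr(a+b\tau)=2a$ for every $u=a+b\tau\in O_{\tau}\subset\Lambda_{\tau}$ reaches the same conclusion a bit more cleanly through Definition-Lemma \ref{DefEv}(i). For (iii) the paper does not unwind the lattice conditions by hand: it notes that $\tau$ satisfies the primitive equation $a\tau^2+b\tau+c=0$ with $a=\beta^2/\mathbf{d}$, $b=-a$, $c=(\beta^2-D)/(4\mathbf{d})$, $\gcd(a,b,c)=1$, and quotes Borevich--Shafarevich to get $O_{\tau}=[1,a\tau]$ with discriminant $b^2-4ac$; your direct analysis of $u,u\tau\in\Lambda_{\tau}$ proves the same thing from scratch, and I checked that your decisive reduction does go through: with $s=2q\beta$, $D=\mathbf{d}D_1$, $\beta^2=\mathbf{d}m$, the condition $4m\mid s(D_1-m)$ is equivalent to $m\mid s$ because $4\mid D_1-m$ and $\gcd(m,D_1-m)=\gcd(m,D_1)=1$, giving the generator $\omega=(\beta/\mathbf{d})(\beta+\sqrt{D})/2=(\beta^2/\mathbf{d})\tau$ and discriminant $(\omega-\bar\omega)^2=(\beta/\mathbf{d})^2D$. (That generator agrees with the one in the paper's proof; the coefficient $\beta/\mathbf{d}$ in front of $(\beta+\sqrt{D})/(2\beta)$ in the displayed statement of (iii) is a typo for $\beta^2/\mathbf{d}$, as one sees from the stated discriminant, so do not be alarmed that your answer differs from the literal display.) You also get oddness in (iii) by reading the parity off the computed odd discriminant, whereas the paper deduces it from Example \ref{EoddD} and hence ultimately from parity preservation under odd-degree isogenies (Proposition \ref{isogPar}); your route removes that dependency. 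Finally, for (ii) the paper, after forcing $\beta$ to be a positive odd integer from the first integrality condition, deliberately switches strategy and invokes the already-proven (iii) to describe $O_{\tau}$ and test membership of $(\beta+\sqrt{D})/2$; you instead finish by checking $(1+\sqrt{D})/2\cdot\tau\in\Lambda_{\tau}$ directly, which indeed reduces to $4\beta\mid(D-\beta^2)$ and hence to $\beta\mid D$ since $D\equiv\beta^2\pmod 4$ and $\beta$ is odd. Both routes are valid: the paper's buys economy by reusing (iii), while yours keeps (ii) logically independent of (iii) and avoids the external citation in (iii) at the cost of a longer computation.
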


\begin{proof}

{\bf  (i)}. Assume that $\mathrm{Re}(\tau)=0$.  Suppose that $\mathcal{E}_{\tau}$ is odd, i.e., the order $O_{\tau}$ is odd. It follows 
from Definition \ref{DefOd}
that there is a {\sl negative} integer $$D \equiv 1 (\bmod 4)$$ 
%that is not a square and 
such that
$$\frac{1+\sqrt{D}}{2} \in O_{\tau}\subset  K=\bbQ(\sqrt{D})=\bbQ+\bbQ \sqrt{D}.$$
 %Clearly $D<0$. 
 Since $\tau \in \bbQ(\sqrt{D})=\bbQ+\bbQ\cdot \sqrt{D}$ and $\mathrm{Re}(\tau)=0$, there is a positive rational number $r$ such that
 $$\tau=r \sqrt{D}.$$
 This implies that 
 $$\Lambda_{\tau}=\bbZ+\bbZ \cdot r\sqrt{D}.$$
 It follows from the definition of $O_{\tau}$ (see  \eqref{LambdaO}) that
 $$\frac{1+\sqrt{D}}{2} \Big(\Lambda_{\tau}\Big)\subset \Lambda_{\tau}.$$
 This implies that there are integers $\alpha,\beta \in \bbZ$ such that
 $$\frac{1+\sqrt{D}}{2}\cdot 1=\alpha+\beta  r\sqrt{D}.$$
 Taking the real parts of both sides, we get $\alpha=1/2$, which contradicts the integrality of $\alpha$.
 The obtained contradiction proves that $\mathcal{E}_{\tau}$ is even, which proves (i).
 
 {\bf (iii)}. It follows from Example \ref{EoddD} that $\mathcal{E}_{\tau}$ is odd.
 
 Since $\beta$ is odd, $\beta^2 \equiv 1 \ (\bmod 4)$. This implies that
 $4 | (\beta^2-D)$, because $D \equiv 1 \ (\bmod 4)$. On the other hand, $\mathbf{d} | (\beta^2-D)$,
 because both $\beta^2$ and $D$ are divisible by $\mathbf{d}$.
 Since $\mathbf{d}$ divides $D$, it is also odd. Since odd $\mathbf{d}$ and $4$ are relatively prime, their product $4\mathbf{d}$ also divides $\beta^2-D$, i.e.,
 $$c: =\frac{\beta^2-D}{4\mathbf{d}} \in \bbZ.$$
 Let us put
 $$a:=\frac{\beta^2}{\mathbf{d}}\in \bbZ, \quad b:=-\frac{\beta^2}{\mathbf{d}}=-a\in \bbZ.$$
 I claim that the greatest common divisor  (GCD) of three integers $a,b,c$ is $1$.
 Indeed, by the definition of $\mathbf{d}$, the integers $\beta^2/\mathbf{d}$ and $D/\mathbf{d}$ are relatively prime.
 This implies that $a=\beta^2/\mathbf{d}$ and $4c=(\beta^2-D)/\mathbf{d}$ are relatively prime. Hence,
 $a$ and $c$ are relatively prime, which implies that the greatest common divisor  of $a,b,c$ is $1$. Notice also that $a \ge 1$.
 
  Clearly,
 $\bar{\tau}=\left(1-\sqrt{D}/\beta\right)/2$ is the complex-conjugate of $\tau$.
 Then 
 $$\tau+\bar{\tau}=1, \quad \tau \bar{\tau}=\frac{\beta^2-D}{4\beta^2}.$$
 It follows that
 $$\tau^2+(-1) \tau+\frac{\beta^2-D}{4\beta^2}=0.$$
 Multiplying it by $\beta^2/\mathbf{d}$, we get
$$\frac{\beta^2}{\mathbf{d}} \tau^2+\left(-\frac{\beta^2}{\mathbf{d}}\right) \tau+\frac{\beta^2-D}{4\mathbf{d}}=0,$$
 i.e.,
  \begin{equation}
 \label{abcd}
 a \tau^2+b \tau +c=0.
  \end{equation}
 The properties of $a,b,c$ mentioned above combined with \eqref{abcd} allow us to describe explicitly
 $$O_{\tau}=\{u \in \bbQ(\tau), \  u \ [\tau,1] \subset [\tau,1]\}.$$
 Namely, by \cite[Ch. 2, Sect. 7.4, Lemma 1]{BS},
 $O_{\tau}$ coincides with
$$[1,a\tau]=\bbZ+\bbZ (a\tau)=\bbZ[a\tau]$$ and its discriminant  is $b^2-4ac$.
In order to finish the proof of (iii), it remains to notice that
$$a\tau=\frac{\beta^2}{\mathbf{d}}\cdot \frac{\beta+\sqrt{D}}{2\beta}=\frac{\beta}{\mathbf{d}} \cdot
\frac{\beta+\sqrt{D}}{2},$$
$$b^2-4ac=\left(-\frac{\beta^2}{\mathbf{d}}\right)^2-4 \frac{\beta^2}{\mathbf{d}}\cdot  \frac{\beta^2-D}{4\mathbf{d}}=
\left(\frac{\beta}{\mathbf{d}}\right)^2 D=\frac{\beta^2}{\mathbf{d}} \cdot \frac{D}{\mathbf{d}}.$$
 
 {\bf  (ii)}. Assume   that $\mathrm{Re}(\tau)=1/2$.  Recall that we are given the
 {\sl negative} integer $D \equiv 1 (\bmod 4)$  such that
 $$K=\bbQ(\tau)=\bbQ(\sqrt{D})=\bbQ\cdot 1 \oplus \bbQ\cdot \sqrt{D}\subset \bbC.$$
 Then there is a positive rational number $r\in \bbQ$ such that
 $$\tau=\frac{1}{2}+r\sqrt{D}.$$
 It follows that
 $$\Lambda_{\tau}=\bbZ\cdot 1+ \bbZ\cdot \tau=\bbZ+\bbZ \left(\frac{1}{2}+r\sqrt{D}\right) \subset \bbC.$$
 By definition of $O_{\tau}$ (see \eqref{LambdaO}),
 $\frac{1+\sqrt{D}}{2} \in O_{\tau}$ if and only if
 \begin{equation}
 \label{Otau}
 \frac{1+\sqrt{D}}{2}\left(\Lambda_{\tau}\right)\subset \Lambda_{\tau}.
 \end{equation}
 However,  since $\{1, \ 1/2+r \sqrt{D}\}$ is a basis of the $\bbQ$-vector space $\bbQ(\sqrt{D})$, there are certain rational numbers $\alpha,\beta,\gamma,\delta \in \bbQ$ such that
 \begin{equation}
 \label{Otau1}
 \frac{1+\sqrt{D}}{2}\cdot 1=\alpha+\beta  \left(\frac{1}{2}+r \sqrt{D}\right), \quad \frac{1+\sqrt{D}}{2} \left (\frac{1}{2}+r \sqrt{D}\right)=\gamma+\delta  \left(\frac{1}{2}+r \sqrt{D}\right).
 \end{equation}
 The inclusion \eqref{Otau} is equivalent to the condition
 \begin{equation}
 \label{ABCD}
 \alpha, \beta, \gamma, \delta \in \bbZ.
 \end{equation}
 
 Opening the brackets and collecting terms in \eqref{Otau1}, we get 
 $$\frac{1+\sqrt{D}}{2}=\left(\alpha+\beta /2\right)+\beta r \sqrt{D}; \quad \left(\frac{1}{4}+rD/2\right)+\frac{1}{2}\left(r+\frac{1}{2}\right)\sqrt{D}=\left(\gamma+\delta/2\right)+\delta r \sqrt{D}.$$
 Taking the real parts and ``imaginary'' parts, we get
 \begin{equation}
 \label{ABCDq}
 1/2=\alpha+\beta/2,  \ 1/2= \beta r; \quad 1/4+rD/2=\gamma+\delta/2; \quad \frac{1}{2}\left(r+1/2\right)=\delta r.
 \end{equation}
 The second equality of \eqref{ABCDq} implies that
 \begin{equation}
 \label{BetaR}
 r=\frac{1}{2\beta}, \quad \tau=\frac{1}{2}+\frac{1}{2\beta}\sqrt{D}.
 \end{equation}
 Since $r$ is positive, $\beta$ is also positive.
 
 It follows from  first equality of \eqref{ABCDq} that {\sl $\alpha$ is an integer if and only if $\beta$ is an odd integer}.
 This implies that if $\beta$ is {\sl not} an odd integer then \eqref{ABCD} does not hold.  So, in the course of the proof we may and will assume that
 \begin{equation}
 \label{betaOdd}
 \beta \in 1+2\bbZ, \quad \alpha \in \bbZ.
 \end{equation}
 Now let us change the strategy and  apply the already proven (iii),
 instead of investigating the integrality of $\gamma$ and $\delta$. We get 
 \begin{equation}
 \label{OtauD}
 O_{\tau}=
\bbZ\cdot 1+\bbZ\cdot \left(\frac{\beta^2}{\mathbf{d}}\right) \left(\frac{\beta+\sqrt{D}}{2\beta}\right)
\end{equation}
where $\mathbf{d}\ge 1$ is the GCD of $D$ and $\beta^2$. We need to find when
$(1+\sqrt{D})/2\in O_{\tau}$. 

Let us put
\begin{equation}
\label{wBeta}
w_{\beta}:=\frac{\beta+\sqrt{D}}{2}=\beta  \cdot \frac{\beta+\sqrt{D}}{2\beta}\in \bbQ(\sqrt{D})=\bbQ(\tau).
\end{equation}
%First, notice that $(1+\sqrt{D})/2\in O_{\tau}$ if and only if
%$$w_{\beta}:=\frac{\beta+\sqrt{D}}{2} \in O_{\tau}.$$
%Indeed, 
Since $\beta$ is odd, 
 $$m_{\beta}:=(\beta-1)/2 \in \bbZ, \ \beta=2 m_{\beta}+1.$$
 This implies that
$$w_{\beta}=\frac{2 m_{\beta}+1+\sqrt{D}}{2}=m_{\beta}+\frac{1+\sqrt{D}}{2}
\in \frac{1+\sqrt{D}}{2}+\bbZ.$$
Since $\bbZ \subset O_{\tau}$, we conclude that $(1+\sqrt{D})/2\in O_{\tau}$ if and only if
$w_{\beta} \in O_{\tau}$.
So, in order to prove (ii), it suffices to check that $w_{\beta} \in O_{\tau}$ if and only if
$\beta$ divides $D$. Let us do it.

Combining \eqref{wBeta} and \eqref{OtauD}, we get
\begin{equation}
\label{Ow}
O_{\tau}=
\bbZ\cdot 1+\bbZ\cdot \left(\frac{\beta}{\mathbf{d}}\right)  w_{\beta}.
%\left(\frac{\beta+\sqrt{D}}{2\beta}\right)
\end{equation}
Since $1$ and $w_{\beta}$ are obviously linearly independent over $\bbQ$, it follows from \eqref{Ow} that $w_{\beta} \in O_{\tau}$ if and only if there exists $n \in \bbZ$ such that
$$ n \cdot \frac{\beta}{\mathbf{d}}=1.$$
This means that the ratio $\mathbf{d}/\beta$ is an integer, i.e., $\beta$ divides $\mathbf{d}$.
Equivalently (in light of the very definition of 
$\mathbf{d}$), 
$\beta$ divides both $D$ and $\beta^2$.
Since $\beta$ always divides $\beta^2$, we get that $\beta$ divides $\mathbf{d}$ if and only if $\beta$ divides $D$.  This ends the proof of (ii).
 \end{proof}

 \begin{ex}
 Fix a negative integer $d$ that is congruent to $1$ modulo $4$  (e.g., we may take $d=-3$). %Then $-d$ is a negative integer that is congruent to $1$ modulo $4$.
 If $k$ and $n$ are positive odd integers then $k^2 n^2 d$ is also a negative integer that is
  congruent to $1$ modulo $4$. If we put 
  $$D=k^2 n^2 d, \quad\beta= n^2$$
   then $\beta$ is a positive odd integer dividing $D$. It follows
  that if we put 
  \begin{equation}
  \label{NKbeta}
  \tau(n,k):=\frac{1}{2}+\frac{1}{2\beta}\sqrt{D}=\frac{1}{2}+\frac{k\sqrt{d}}{2n}
  \end{equation}
  then the elliptic curve $\mathcal{E}_{\tau(n, k)}$ is {\sl odd}.

 Recall that $n$ and $k$ could be any odd positive integers.  Notice also that all  CM elliptic curves $\mathcal{E}_{\tau(n, k)}$ are isogenous to each other, because the imaginary quadratic field
 $$\bbQ(\tau(n,k))=\bbQ(\sqrt{d})$$
 does not depend on $n,k$.
 %Clearly, the set of fractions $k/n$ where $k$ and $n$ run through the set of odd positive integers is dense in 
%  $\mathbb{R}_{+}$.
% It follows that
%the set of all $\tau(n,k)$ is dense in  $1/2+\mathbb{R}_{+}\cdot \mathbf{i}$.
  \end{ex}

\begin{cor}
\label{evenE}
Let $d$ be a square-free negative integer such that 
 $d \equiv 1 (\bmod 4)$. 
 If $n$ and $m$ are positive odd  integers  and 
 $$\tau=\frac{1}{2}+\frac{n}{m}\sqrt{d}$$ then $\mathcal{E}_{\tau}$ is even.
\end{cor}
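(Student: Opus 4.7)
The plan is to mimic the proof of Lemma \ref{realDIV}(iii) essentially verbatim: write down the minimal polynomial of $\tau$ over $\Q$, normalize it to a primitive polynomial, read off $\discr(O_\tau)$ using \cite[Ch. 2, Sect. 7.4, Lemma 1]{BS}, and observe that the answer is divisible by $64$.

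First I note that $\tau\in\Q(\sqrt{d})\setminus\Q$, so $\Q(\tau)=\Q(\sqrt{d})$ is an imaginary quadratic field and $\mathcal{E}_\tau$ has CM. Since $n$ and $m$ are both odd, $g:=\gcd(n,m)$ is odd, and replacing $n,m$ by $n/g,\,m/g$ does not change $\tau$ and leaves both integers positive and odd. So I may assume $\gcd(n,m)=1$. Squaring $\tau-1/2=(n/m)\sqrt{d}$ and clearing denominators produces
$$4m^2\tau^2-4m^2\tau+(m^2-4n^2 d)=0.$$
Writing $a=4m^2$, $b=-4m^2$, $c=m^2-4n^2 d$, the oddness of $m$ forces $c$ to be odd, so $\gcd(a,b,c)=\gcd(m^2,c)=\gcd(m^2,4n^2 d)=\gcd(m^2,n^2 d)$. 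The assumption $\gcd(n,m)=1$ reduces this to $\gcd(m^2,d)$, and a prime-by-prime check using the squarefreeness of $d$ yields $\gcd(m^2,d)=\gcd(m,d)$. Set $e:=\gcd(m,d)$; this is an odd positive divisor of $m$.

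Dividing the minimal polynomial through by $e$ yields the primitive polynomial $A\tau^2+B\tau+C=0$ with $A=4m^2/e$, $B=-A$, $C=(m^2-4n^2 d)/e$. Applying \cite[Ch. 2, Sect. 7.4, Lemma 1]{BS} exactly as in the proof of Lemma \ref{realDIV}(iii) gives $O_\tau=\Z[A\tau]$ and
$$\discr(O_\tau)=B^2-4AC=A(A-4C)=\frac{4m^2}{e}\cdot\frac{16n^2 d}{e}=64\left(\frac{m}{e}\right)^{\!2}n^2 d.$$
Since $e\mid m$ this is a nonzero integer divisible by $64$, and in particular even. By Definition-Lemma \ref{DefEv}(ii) the order $O_\tau$ is even, so $\mathcal{E}_\tau$ is even.

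The only slightly delicate step is the gcd computation that extracts the primitive polynomial, which is where the squarefreeness hypothesis on $d$ is used (to pass from $\gcd(m^2,d)$ to $\gcd(m,d)$). Apart from that, the argument is the same calculation as in Lemma \ref{realDIV}(iii); the extra factor of $64=2^6$ appearing in the discriminant is precisely what converts the parity from odd to even.
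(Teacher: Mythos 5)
Your proof is correct, but it takes a genuinely different route from the paper's. The paper reduces to the case $n=m=1$ via Example \ref{crucial} (an odd-degree isogeny $\mathcal{E}_{\tau_{m,n}}\to\mathcal{E}_{\tau}$ plus parity preservation, Proposition \ref{isogPar}) and then argues by contradiction: if $O_\tau$ were odd its conductor $\mathfrak{f}$ would be odd, forcing $\frac{1+\mathfrak{f}\sqrt{d}}{2}\in O_\tau$, and Key Lemma \ref{realDIV}(ii) would then produce an odd $\beta$ with $\mathfrak{f}=2\beta$ --- a contradiction. You instead redo the computation of Lemma \ref{realDIV}(iii) directly for the new $\tau$: after reducing to $\gcd(n,m)=1$, you extract the primitive quadratic $A\tau^2+B\tau+C=0$ with $A=4m^2/e$, $e=\gcd(m,d)$, and read off $\discr(O_\tau)=B^2-4AC=64(m/e)^2n^2d$ from \cite[Ch.\ 2, Sect.\ 7.4, Lemma 1]{BS}. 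I checked the gcd bookkeeping (the passage $\gcd(4m^2,c)=\gcd(m^2,c)=\gcd(m^2,4n^2d)=\gcd(m^2,d)=\gcd(m,d)$, using $c$ odd, $\gcd(n,m)=1$, and squarefreeness of $d$) and the final arithmetic; both are right, and a spot check ($d=-15$, $m=3$, $n=1$ gives $\discr=-960$) confirms the formula. Your approach is more self-contained --- it needs neither Proposition \ref{isogPar} nor part (ii) of the Key Lemma --- and it yields strictly more information, namely the exact discriminant and hence the conductor $\mathfrak{f}=8mn/\gcd(m,d)$, always divisible by $8$, whereas the paper's argument only delivers the parity. The paper's route, on the other hand, recycles already-proved structural lemmas and avoids any new computation.
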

\begin{proof}

In light of Example \ref{crucial} applied to $y=2\sqrt{|d|}$, it suffices to check the case $n=m=1$, i.e., we may (and will) assume that 
$$\tau=1/2+\sqrt{d}.$$
Clearly, 
%$-d$ is congruent to $1$ modulo $4$ and 
$K:=\bbQ(\tau)=\bbQ(\sqrt{d})$. Suppose that $\mathcal{E}_{\tau}$ is {\sl odd}, i.e., the order  $O_{\tau}$ is odd. 
Let $\mathfrak{f}$ be the conductor of $ O_{\tau}$. By Lemma \ref{DefOd}, $\mathfrak{f}$  is odd, hence,
$$\frac{1-\mathfrak{f}}{2} \in \bbZ.$$
It follows from \cite[Ch. 2, Sect. 7, Th.1]{BS}
that
$$O_{\tau}=\bbZ+\mathfrak{f}\frac{1+\sqrt{d}}{2}\bbZ \ni \frac{1-\mathfrak{f}}{2}+\mathfrak{f}\frac{1+\sqrt{d}}{2}=\frac{1+\mathfrak{f}\sqrt{d}}{2}.$$
So,
$$\frac{1+\mathfrak{f}\sqrt{d}}{2} \in O_{\tau}.$$
Notice that 
$$\frac{1+\mathfrak{f}\sqrt{d}}{2}=\frac{1+\sqrt{D}}{2}$$
where $D=\mathfrak{f}^2 d$ is a negative integer that is also congruent to $1$ modulo $4$, because $\mathfrak{f}$ is odd.
By Lemma \ref{realDIV}, there is an {\sl odd} positive integer $\beta$ such that
$$\tau=1/2+\frac{1}{2\beta}\sqrt{D}=1/2+\frac{\mathfrak{f}}{2\beta}\sqrt{d}.$$
Since $\tau=1/2+\sqrt{d}$, we get
$\frac{\mathfrak{f}}{2\beta}=1$, i.e., $\mathfrak{f}=2\beta$. Since  $\mathfrak{f}$ is odd, we get the desired contradiction.
\end{proof}

%\begin{rem}
%Let $d$ be a negative square-free integer that is congruent to $1$ modulo $4$. Then the set of
%all 
%$$\frac{1}{2}+\frac{n}{m}\sqrt{d}$$ (where $n,m$ run through the set of odd positive integers) is dense in $1/2+\bbR_{+}\cdot \mathbf{i}$.
%\end{rem}
 
 \section{$j$-invariants: the real case}
 \label{mainproof}
 
 We will need the following results that are either contained in  \cite[Ch. V, Sect. 2]{Sil} (see also \cite[Ch. 14, Sect. 4]{HM} and \cite[Ch. 1, Sect. 4.3]{Gross}) or follow readily from them. 
 \begin{itemize}
 \item
 Let us consider the {\sl disjoint} subsets $\mathfrak{T}_1$ and $\mathfrak{T}_2$ of $\mathfrak{H}$ defined as follows.
 $$\mathfrak{T}_1:=\{\mathbf{i}t \mid t \in \bbR, \ t \ge 1\}; \quad \mathfrak{T}_2:=\{\frac{1}{2}+\mathbf{i}t \mid t \in \bbR, \ t>1/2\}.$$
 Let us put
 $$\mathfrak{T}:=\mathfrak{T}_1 \cup  \mathfrak{T}_2 \subset \mathfrak{H}.$$
 If $s$ is a real number then there is {\sl precisely one} $\tau \in \mathfrak{T}$ such that
 $$j(\mathcal{E}_{\tau})=j(\tau)=s.$$
 In particular, the continuous function
 $$F: \mathfrak{T} \to \bbR, \  \tau \mapsto j(\tau)=j(\mathcal{E}_{\tau})$$
 is a bijective map \cite[Ch. V, Sect. 2, p. 417]{Sil} .
 \item
 The real-valued function
 $$f: [1/2, \infty] \to (-\infty,\infty),  \quad y \mapsto \tau=1/2+iy \mapsto  j(\tau)=j(\mathcal{E}_{\tau})$$
is continuous injective, and 
$$f(1/2)=1728, \  f(\sqrt{3}/2)=0, \ \lim_{y \to \infty} f(y)= - \infty$$
(see \cite[Ch. V, Sect. 2, p. 414]{Sil}).
It follows that $f$ is decreasing and the image of the map $f$ lies in the closed semi-infinite interval $(-\infty,1728]$.
So, one may view as a  decreasing function the map
$$f: [1/2, \infty) \to (-\infty,1728],  \quad  y \mapsto \tau=1/2+iy \to  j(\tau)=j(\mathcal{E}_{\tau}),$$
which is obviously bijective. In addition, the image of the open semi-infinite interval 
 $(1/2, \infty)$ under $f$ is the open semi-infinite interval $(-\infty,1728)$.
\item
The bijectiveness of the continuous map
$F:\mathfrak{T} \to \bbR$
implies that the set
$$\{j(\mathcal{E}_{\tau})\mid \tau \in \mathfrak{T}_1\}$$
coincides with the closed semi-infinite interval $[1728, \infty)$.
\end{itemize}

\begin{proof}[Proof of Theorem \ref{orbitR}]
Without loss of generality we may and will assume that
$E=\mathcal{E}_{\tau}$ with $\tau \in \mathfrak{T}$.

Suppose that $\mathcal{E}_{\tau}$ is odd.   In light of Lemma \ref{realDIV}(i), $\mathrm{Re}(\tau) \ne 0$. 
 Since $\tau \in \mathfrak{T}$, we get
$\mathrm{Re}(\tau) =1/2$, i.e., $\tau=\frac{1+\mathbf{i}y}{2}$ for some real positive $y$. In other words,
$$\tau \in \mathfrak{T}_2.$$
Let 
$E^{\prime}$ be any odd CM elliptic curve  with real $j(E^{\prime})$  such that $\mathcal{E}_{\tau}$ and $E^{\prime}$ are isogenous over $\bbC$.
Without loss of generality we may assume that
$$E^{\prime}=\mathcal{E}_{\tau^{\prime}}, \ j(E^{\prime})=j(\mathcal{E}_{\tau^{\prime}})=j(\tau^{\prime})$$
for a certain $\tau^{\prime} \in \mathfrak{T}$. Applying Lemma \ref{realDIV}(i) to $\mathcal{E}_{\tau^{\prime}}$ (instead of $\mathcal{E}_{\tau}$),
we conclude that 
 $\mathrm{Re}(\tau^{\prime}) \ne 0$ and therefore 
$$\tau^{\prime} \in \mathfrak{T}_2.$$
 It follows that $\mathcal{J}(\bbR,E)$ lies in the open interval $(-\infty,1728)$ and therefore
 the closure of
 $\mathcal{J}(\bbR,E)$ in $\bbR$  lies in the closed interval $(-\infty,1728]$.

 If $n, m$ are any  positive odd integers such that $$\frac{m}{n}y>1$$ then 
 $$\frac{m}{n}y/2>1/2,$$
  and the set of all
 such $\frac{m}{n}y/2$ is dense in $(1/2,\infty)$. If we put
$$\tau_{m,n}:=\frac{1+\mathbf{i}\frac{m}{n}y}{2} \in  \mathfrak{T}$$
then %$\tau_{m,n}\in \mathfrak{T}_2$, 
 the set of all  $\tau_{m,n}$ is a dense subset of $\mathfrak{T}_2$.
By Example \ref{crucial}, 
all CM elliptic curves $\mathcal{E}_{\tau_{m,n}}$ are odd and isogenous to $\mathcal{E}_{\tau}$. In addition,
$$f\left(\frac{m}{n}y/2\right) =j\left(\mathcal{E}_{\tau_{m,n}}\right)\in \mathcal{J}(\bbR,\mathcal{E}_{\tau}) \subset (-\infty,1728].$$
It follows from the continuity of $f$ that the set of all $j(\mathcal{E}_{\tau_{m,n}})$ is dense in
$f((1/2,\infty])=(-\infty,1728)$. This implies  that the closure of $\mathcal{J}(\bbR,\mathcal{E}_{\tau})$ coincides with $(-\infty,1728]$
when $\mathcal{E}_{\tau}$ is {\sl odd}.

Suppose that $\mathcal{E}_{\tau}$ is {\sl even}. Then $\bbQ(\tau)=\bbQ(\sqrt{d})$ where $d$ is a  square-free  negative integer.
%First, assume that  $\tau \in \mathfrak{T}_1$, i.e., $\mathrm{Re}(\tau)=0$ and $\tau=r \sqrt{-d}$ for some  positive rational number $r$.
  If $n, m$ are any  positive  integers such that $$\frac{m}{n}  \sqrt{|d|} > 1$$
then the CM elliptic curves $\mathcal{E}_{\frac{m}{n}\sqrt{d}}$ are  isogenous to $\mathcal{E}_{\tau}$;
in addition, they all are even, thanks to Lemma \ref{realDIV}(i).  On the other hand, obviously,
the set
of all such $\frac{m}{n}\sqrt{d}$  lies in $\mathfrak{T}_1$ (in particular, all $j\left(\mathcal{E}_{\frac{m}{n}\sqrt{d}}\right)\in \bbR$) 
and are dense there. This implies that 
the closure of $\mathcal{J}(\bbR,\mathcal{E}_{\tau})$ contains $F(\mathfrak{T}_1)=[1728, \infty)$.

Suppose that $d \not \equiv 1 (\bmod 4)$. %Then $-d \not \equiv 1 (\bmod 4)$. 
Then the quadratic field
$\bbQ(\sqrt{d})$
has discriminant $4d$, which is {\sl even}. It follows from Definition \ref{DefEv}
 that every elliptic curve
$\mathcal{E}_{\tau^{\prime}}$ with $$\bbQ(\tau^{\prime})=\bbQ(\sqrt{d})=\bbQ(\tau)$$ is {\sl even} and isogenous to $\mathcal{E}_{\tau}$.
In particular, if $m,n$ are any positive integers such that 
$\frac{m}{n}  \sqrt{|d|} > 1$ then
$$\tau(m,n):=1/2+\frac{m}{n}\sqrt{d} \in \mathfrak{T}_2,  \quad \bbQ(\tau(m,n))=\bbQ(\sqrt{d}),$$ 
which implies that the elliptic curve
$\mathcal{E}_{\tau(m,n)}$ is   isogenous to $\mathcal{E}_{\tau}$ and  even. Since
$\tau(m,n) \in \mathfrak{T}_2$, we get
$$j\left(\mathcal{E}_{\tau(m,n)}\right) \in \bbR \ \text{ and therefore} \ j\left(\mathcal{E}_{\tau(m,n)}\right) \in \mathcal{J}(\bbR,\mathcal{E}_{\tau}).$$
On the other hand, the set of all such  $\tau(m,n)$ is obviously dense in $\mathfrak{T}_2$. It follows
that the closure of $\mathcal{J}(\bbR,\mathcal{E}_{\tau})$ contains  $f\left(\mathfrak{T}_1\right)=(-\infty,1728)$.
Recall that we have already checked that this closure contains $[1728, \infty)$. It follows that 
 the closure of $\mathcal{J}(\bbR,\mathcal{E}_{\tau})$ coincides with the whole $\bbR$ if $d \not\equiv 1 (\bmod 4)$.
 
 Now suppose that $d \equiv 1 (\bmod 4)$. %, i.e., $-d \equiv 1 (\bmod 4)$. 
 If $m,n$ are any positive odd  integers such that 
$\frac{m}{n}  \sqrt{|d|} > 1/2$ then
$$\tau(m,n):=\frac{1}{2}+\frac{m}{n}\sqrt{d} \in \mathfrak{T}_2, \quad j\left(\mathcal{E}_{\tau(m,n)}\right) \in \bbR,$$ and the elliptic curve
$\mathcal{E}_{\tau(m,n)}$ is   isogenous to $\mathcal{E}_{\tau}$. 
%We have $j(\mathcal{E}_{\tau(m,n)}) \in \bbR$.
 By Corollary \ref{evenE}, $\mathcal{E}_{\tau(m,n)}$ is {\sl even} 
and therefore $j\left(\mathcal{E}_{\tau(m,n)}\right) \in \mathcal{J}(\bbR,\mathcal{E}_{\tau})$.
Clearly, the set of all such $\tau(m,n)$ is dense in $\mathfrak{T}_2$. It follows that
the closure of $\mathcal{J}(\bbR,\mathcal{E}_{\tau})$ contains $F(\mathfrak{T}_2)=(-\infty,1728)$.
Recall that we have already checked that this closure contains $[1728, \infty)$. It follows that 
 the closure of $\mathcal{J}(\bbR,\mathcal{E}_{\tau})$ coincides with the whole $\bbR$ if $d \equiv 1 (\bmod 4)$.
 This ends the proof of Theorem \ref{orbitR}.
\end{proof} 

\begin{rem}
\label{Rconnected}
Let $E_{\bbR}$ be an elliptic curve over $\bbR$ and let $E=E_{\bbR}\times_{\bbR}\bbC$ be its complexification.
Then there is  $\tau \in \mathfrak{H}$ that enjoys the following properties (see \cite[Ch. V,  Sect. 5.2, Th. 2.3 and its proof]{Sil}).
\begin{itemize}
\item[(i)]
$\mathrm{Re}(\tau)=0$ or $1/2$.
\item[(ii)]
$E$ is isomorphic to $\mathcal{E}_{\tau}$. In particular,
$$j(E_{\bbR})=j(E)=j(\tau)\in \bbR.$$
\item[(iii)]
$q:=\exp(2\pi \mathbf{i}\tau)$ is a nonzero real number such that $|q|<1$  and the real Lie group
$E_{\bbR}(\bbR)$ of $\bbR$-points on $E$ is isomorphic to the quotient $\bbR^{*}/q^{\bbZ}$.
(Here $q^{\bbZ}$ is the cyclic multiplicative subgroup of $\bbR^{*}$ generated by $q$.)
In particular, $E_{\bbR}(\bbR)$ is {\sl connected} if and only if $q<0$, i.e., $\mathrm{Re}(\tau)=1/2$.
\item[(iv)]
Suppose that $E$ is an {\sl odd} elliptic curve with CM. It follows from Lemma \ref{realDIV}(i) that
$\mathrm{Re}(\tau)=1/2$. In light of (iii),
%This implies that  
$E_{\bbR}(\bbR)$ is connected.
\end{itemize}
\end{rem}

\section{Odd and even orders}
\label{OddEven}
The aim of this section is to explain why Definitions \ref{DefOd} and \ref{DefEv} make sense.
Let $O$ be an order in a quadratic field $K$ with conductor $\mathfrak{f}$.

The equivalence of properties  presented in Definition \ref{DefOd} is an immediate corollary of the following assertion.
\begin{lem}
\label{DefOdd}
The following  conditions  are equivalent.
\begin{itemize}
\item[(i)]
$\mathrm{tr}(O)=\bbZ$.
\item[(ii)]
 $\tra(O[1/n])$ contains $1$  for all   odd positive integers  $n$. 
\item[(iii)]
$\mathrm{discr}(O)$ is an odd integer.
\item[(iv)]
Both $\mathrm{discr}(K)$ and the conductor $\mathfrak{f}_O$  are odd integers.
\item[(v)]
There exists $ w\in O$ such that $\mathrm{tr}(w)$ is an odd integer.
\item[(vi)]
There exists an integer $D \equiv 1 (\bmod 4)$ that is not a square such that
$$\frac{1+\sqrt{D}}{2} \in O.$$
\item[(vii)]
There exists an integer $D \equiv 1 (\bmod 4)$ that is not a square such that
$$O=\bbZ+ \bbZ \ \frac{1+\sqrt{D}}{2} =\bbZ\left[\frac{1+\sqrt{D}}{2}\right].$$
%(If this the case then $\mathrm{discr}(O)=D$.)
%\item[(viii)] There exist a  square-free integer $d \equiv 1 (\bmod 4)$ and an odd positive integer $f$ such that
%$$K=\bbQ(\sqrt{d}), \quad \frac{1+f\sqrt{d}}{2} \in O.$$
\end{itemize}
\end{lem}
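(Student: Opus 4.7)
My plan is to establish the seven equivalences by grouping the conditions into three clusters---the trace conditions (i), (ii), (v); the discriminant conditions (iii), (iv); and the explicit presentation (vi), (vii)---and linking them by short implications rather than running a long cycle. For the trace cluster, equation \eqref{trO2} shows that $\tr(O)$ is either $\Z$ or $2\Z$, so (i) $\Leftrightarrow$ (v) is immediate: existence of an element of odd trace rules out $\tr(O) = 2\Z$, and conversely $\tr(O) = \Z$ supplies such an element. The equivalence (i) $\Leftrightarrow$ (ii) follows by specializing $n=1$ in one direction and using $O \subset O[1/n]$ in the other.

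For the discriminant cluster, formula \eqref{disF} yields $\discr(O) = \mathfrak{f}_O^2 \cdot \discr(K)$, so (iii) $\Leftrightarrow$ (iv) reduces to the fact that a product of integers is odd exactly when each factor is. To link this cluster with the trace conditions I would pick any $\Z$-basis $\{1, \alpha\}$ of $O$ and compute the Gram matrix of the trace pairing,
\[
\begin{pmatrix} 2 & \tr(\alpha) \\ \tr(\alpha) & \tr(\alpha^2) \end{pmatrix},
\]
whose determinant $2\tr(\alpha^2) - \tr(\alpha)^2$ is congruent to $\tr(\alpha)^2$ modulo $2$. Since every element of $O$ has the form $n + m\alpha$ and hence trace $2n + m\tr(\alpha)$, the existence of an element of odd trace in $O$ is equivalent to $\tr(\alpha)$ being odd, which by the previous congruence is equivalent to $\discr(O)$ being odd; this gives (iii) $\Leftrightarrow$ (v) and closes the loop among (i)--(v).

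For the explicit-description cluster, (vii) $\Rightarrow$ (vi) is trivial, and (vi) $\Rightarrow$ (v) follows from $\tr((1+\sqrt{D})/2) = 1$. The main substantive step, and the one I expect to be the principal obstacle, is (i) $\Rightarrow$ (vii). The plan is to take a $\Z$-basis $\{1, \alpha\}$ of $O$, note that $\tr(\alpha)$ is odd by the remark above, and then \emph{normalize} by replacing $\alpha$ with $\alpha' := \alpha - (\tr(\alpha) - 1)/2$; this is a translation by an integer---precisely where the odd hypothesis is used---so $\{1, \alpha'\}$ remains a $\Z$-basis of $O$ and $\tr(\alpha') = 1$. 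Integrality of $\alpha' \in O_K$ forces $\alpha' \bar{\alpha'} \in \Z$, making $\alpha'$ a root of $x^2 - x + \alpha'\bar{\alpha'}$; setting $D := 1 - 4 \alpha' \bar{\alpha'}$ gives $D \equiv 1 \pmod 4$ and $\alpha' = (1 + \sqrt{D})/2$ after choosing a sign for $\sqrt{D}$. Moreover $D$ cannot be a square, for otherwise $\alpha' \in \Q \cap O_K = \Z$, contradicting $\tr(\alpha') = 1$. Hence $O = \Z + \Z \alpha' = \Z[(1+\sqrt{D})/2]$, which is (vii). The delicate point is ensuring that the normalized pair $\{1, \alpha'\}$ still generates all of $O$ rather than a proper sub-order; this is automatic because we altered the second basis element only by an integer.
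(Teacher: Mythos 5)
Your proof is correct, but it takes a genuinely different route from the paper's. The paper proves Lemmas \ref{DefOdd} and \ref{DefEven} simultaneously by reducing everything to (i) and (iii) and then running an explicit case analysis on the square-free integer $d$ with $K=\Q(\sqrt{d})$ ($d\equiv 1$ or $\not\equiv 1 \pmod 4$) and on the parity of the conductor $\mathfrak{f}_O$, using the structure theorem $O=\Z+\mathfrak{f}_O O_K$ and the known formulas $\discr(K)=d$ or $4d$ to exhibit $D=\mathfrak{f}_O^2 d$ concretely in each case. You instead avoid the case split entirely: you link the discriminant cluster to the trace cluster by computing the Gram matrix of the trace form on a basis $\{1,\alpha\}$ and reducing its determinant mod $2$, and you prove (i) $\Rightarrow$ (vii) by the normalization $\alpha\mapsto\alpha-(\tr(\alpha)-1)/2$, which is an integer translation precisely because $\tr(\alpha)$ is odd, after which $D=1-4\,\alpha'\bar{\alpha'}$ falls out of the minimal polynomial. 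Both arguments are sound; yours is shorter, more intrinsic, and makes the role of the oddness hypothesis transparent, while the paper's buys the even case for free and the explicit identification $D=\mathfrak{f}_O^2 d$, which it reuses later (Remark \ref{disc4}, Corollary \ref{evenE}). Two small points you may as well make explicit: the existence of a $\Z$-basis of $O$ of the form $\{1,\alpha\}$ (immediate from $O=\Z+\mathfrak{f}_O O_K$, or from $O\cap\Q=\Z$ and $O/\Z$ free of rank one), and, in the non-square step, that $D\equiv 1\pmod 4$ a perfect square would force $\sqrt{D}\in\Z$ and hence $\alpha'\in\Q$, which contradicts $\{1,\alpha'\}$ spanning a rank-two lattice just as well as the parity of $\tr(\alpha')$ does.
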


The equivalence of properties  presented in Definition \ref{DefEv} is an immediate corollary of the following assertion.
\begin{lem}
\label{DefEven}
The following  conditions  are equivalent.
\begin{itemize}
\item[(i)]
$\mathrm{tr}(O)=2\bbZ$.
\item[(ii)]
 $\tra(O[1/n])$ does {\sl not} contain $1$ for all   odd positive integers  $n$. 
\item[(iii)]
$\mathrm{discr}(O)$ is an even integer.
\item[(iv)]
Either $\mathrm{discr}(K)$ or the conductor $\mathfrak{f}_O$  is an even integer.
\item[(v)]
There exists an integer $D$ that  is not a square and such that
$$O=\bbZ+ \bbZ \ \sqrt{D}=\bbZ\left[\sqrt{D}\right].$$ 
%(If this the case then $\mathrm{discr}(O)=4D$.)
\end{itemize}
\end{lem}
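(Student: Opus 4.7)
I would prove Lemma \ref{DefEven} in tandem with Lemma \ref{DefOdd}, exploiting the fact (immediate from \eqref{trO2}) that every order $O$ is either odd or even; many of the equivalences then follow by pure logical negation from Lemma \ref{DefOdd}.

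\textbf{Strategy for (i) $\Leftrightarrow$ (iii) $\Leftrightarrow$ (iv).} By \eqref{trO2} the only two possibilities are $\tr(O)=\Z$ and $\tr(O)=2\Z$, so condition (i) of Lemma \ref{DefEven} is simply the negation of condition (i) of Lemma \ref{DefOdd}. The equivalence (iii) $\Leftrightarrow$ (iv) is immediate from the identity \eqref{disF}, $\discr(O)=\f_O^{\,2}\cdot\discr(K)$: a product of two integers is even exactly when at least one factor is. Combining this with Lemma \ref{DefOdd} (which will have just been proved and shows that (i), (iii), (iv) there all characterize oddness) shows that (i), (iii), (iv) in Lemma \ref{DefEven} are equivalent.

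\textbf{Strategy for (ii).} If $O$ is even, then $\tr(\alpha)\in 2\Z$ for every $\alpha\in O$; for odd positive $n$, any element of $O[1/n]$ has the form $\alpha/n^k$ with $\alpha\in O$, and its trace lies in $2\Z[1/n]$. But $1\notin 2\Z[1/n]$, since otherwise $n^k=2m$ for some integer $m$, contradicting $n$ odd. Conversely if $O$ is odd then by Lemma \ref{DefOdd}(i) we have $1\in \tr(O)\subset \tr(O[1/n])$ for every $n$, so (ii) fails.

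\textbf{Strategy for (v).} One direction is immediate: if $O=\Z[\sqrt{D}]$ with $D$ a non-square integer, then $\tr(\sqrt{D})=0$, so $\tr(O)\subset 2\Z$, and equality holds since $\tr(1)=2$. For the reverse, write $K=\Q(\sqrt{d})$ with $d$ a squarefree integer. If $d\not\equiv 1\pmod 4$ then $O_K=\Z[\sqrt{d}]$, and
\[
O=\Z+\f_O\cdot O_K=\Z+\f_O\sqrt{d}\,\Z=\Z[\sqrt{\f_O^{\,2}d}],
\]
giving $D=\f_O^{\,2}d$. If $d\equiv 1\pmod 4$ then $\discr(K)=d$ is odd, so by (iv) evenness of $O$ forces $\f_O$ to be even; writing $\f_O=2m$ one has
\[
O=\Z+2m\cdot\tfrac{1+\sqrt{d}}{2}\,\Z=\Z+m(1+\sqrt{d})\Z=\Z+m\sqrt{d}\,\Z=\Z[\sqrt{m^{2}d}],
\]
so one may take $D=m^{2}d$.

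\textbf{Expected obstacle.} Nothing here is deep; the only place that requires care is the case $d\equiv 1\pmod 4$ in (v), where one must use evenness of $\f_O$ in an essential way (to absorb the $\Z$-summand $m\cdot 1$ and rewrite the generator $m(1+\sqrt{d})$ as $m\sqrt{d}$). Once this bookkeeping is done, the entire lemma reduces to Lemma \ref{DefOdd} plus the dichotomy from \eqref{trO2}.
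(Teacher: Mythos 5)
Your proposal is correct and follows essentially the same route as the paper: the trace dichotomy $\tr(O)=\Z$ or $2\Z$, the identity $\discr(O)=\f_O^2\discr(K)$ for (iii)$\Leftrightarrow$(iv), the observation $\tr(O[1/n])=\Z[1/n]\cdot\tr(O)$ for (ii), and the case analysis on $d\bmod 4$ and the parity of $\f_O$ with the explicit presentation $O=\Z+\f_O O_K$ from Borevich--Shafarevich to produce $O=\Z[\sqrt{D}]$. The only difference is organizational: the paper runs a single unified case analysis verifying which properties of both lemmas hold in each case, whereas you derive (i), (iii), (iv) of this lemma by negating the corresponding items of Lemma \ref{DefOdd}; the content is the same.
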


\begin{proof}[Proof of  Lemmas \ref{DefOdd} and \ref{DefEven}]

$ $\newline
%\medskip

{\bf Step 0} Let us start with the computation of certain discriminants. Let  $D$ be an integer that is {\sl not} a square. Then $\bbQ(\sqrt{D})$ is a quadratic field. Obviously,
\begin{equation}
\label{trRD}
\tra_{\bbQ(\sqrt{D})}\left(\sqrt{D}\right)=0.
\end{equation}
Let us consider the lattice ({\sl full module} in the terminology of \cite{BS})
$\bbZ +\bbZ \ \sqrt{D}$ in $\bbQ(\sqrt{D})$.  Let us compute its {\sl discriminant}
$\mathrm{disc}(\bbZ +\bbZ \ \sqrt{D})$ \cite[Ch. II, Sect. 2.5]{BS}.

In light of \eqref{trRD}, 
 the matrix of the trace form with respect to the basis $\{1, \sqrt{D}\}$ of this lattice is
$$\begin{pmatrix}
2 & 0\\
0 & 2D
\end{pmatrix},$$
whose determinant is $4D$. Hence, the discriminant
\begin{equation}
\label{Dev}
\mathrm{disc}\left(\bbZ +\bbZ \ \sqrt{D}\right)=4D
\end{equation}
  is {\sl even}.  On the other hand, $\bbZ+ \bbZ \ \frac{1+\sqrt{D}}{2}$ is a subgroup of index $2$ in
$$\bbZ +\bbZ \ \sqrt{D}=\bbZ+\bbZ \ (1+\sqrt{D}),$$
hence, the discriminant of  $\bbZ+ \bbZ \ \frac{1+\sqrt{D}}{2}$ is given by the formula
\begin{equation}
\label{Dodd}
\mathrm{disc}\left (\bbZ+ \bbZ \ \frac{1+\sqrt{D}}{2}\right)=
\frac{\mathrm{disc}(\bbZ +\bbZ \ \sqrt{D})}{2^2}=\frac{4D}{4}=D.
\end{equation}

Now let us discuss traces.
Recall  (\eqref{trO2} and  \eqref{disF}) that
\begin{equation}
\label{trO3}
\tra(O)=\bbZ \ \text{ or } 2\bbZ, \quad \mathrm{discr}(O)=\mathrm{discr}(K) \cdot \mathfrak{f}^2.
\end{equation}

{\bf Step 1} Let $n$ be an {\sl odd} positive integer. Then
$$\tra(O[1/n])=\bbZ[1/n] \cdot \tra(O)\subset \bbQ.$$
It follows that 
$$\tra(O[1/n])=\bbZ[1/n] \ni 1 \ \text{ if } \tra(O)=\bbZ;$$
$$\tra(O[1/n])=2\cdot \bbZ[1/n] \not\ni 1 \ \text{ if } \tra(O)=2\bbZ.$$
Now the equivalence of (i) and (ii) follows from first equality of \eqref{trO3} (for both Lemmas).
The equivalence of (iii) and (iv)  follows from the second equality of \eqref{trO3} (for both Lemmas).

{\bf Step 2} Let us concentrate for a while on the proof of   Lemma
\ref{DefOdd}.  The  equivalence of (i) and (v)  follows from first equality of \eqref{trO3}. In addition,
(vii) obviously implies (vi) while (vi) implies (v), because if we put $w=\frac{1+\sqrt{D}}{2}$ then
$$\tra(w)=\tra\left(\frac{1+\sqrt{D}}{2}\right)= \frac{1+\sqrt{D}}{2}+\frac{1-\sqrt{D}}{2}=1$$
is an {\sl odd} integer. 
In light of \eqref{Dodd}, (vii) implies that
$\mathrm{disc}(O)=D$, which is {\sl odd}.
Therefore (vii) implies (iii).

So, in order to finish the proof of  Lemma \ref{DefOdd}, it suffices to check that 
\begin{itemize}
\item[{\bf (1o) }]
(i) implies (vii);
\item[{\bf (2o) }]
 (iii) implies  (i).
\end{itemize}

Now let us switch to  Lemma \ref{DefEven}. Clearly,  (v)   implies
 (i). On the other hand, in light of \eqref{Dev}, (v) implies that $\mathrm{disc}(O)=4D$, which is {\sl even}. Therefore
(v) implies (iii).

 So, in order to finish the proof of  Lemma \ref{DefEven}, it suffices to check that 
 \begin{itemize}
\item[{\bf (1e) }]
(i) implies (v);
\item[{\bf (2e) }]
(iii)  implies (i).
\end{itemize}

{\bf Step 3} We have $K=\bbQ(\sqrt{d})$ where $d \ne 1$ is a  square-free integer. Clearly,
\begin{equation}
\label{trRoot}
\tra(\sqrt{d})=0.
\end{equation}

Suppose that $d \not\equiv 1 (\bmod 4)$. 
It is known \cite[Ch. 2, Sect. 7, Th.1]{BS} that
$$\mathrm{disc}(K)= 4d, \ \  O=\bbZ+\mathfrak{f}\sqrt{d}\ \bbZ=\bbZ+\sqrt{\mathfrak{f}^2 d}\ \bbZ, \quad \mathrm{disc}(O)=\mathrm{disc}(K)\mathfrak{f}^2= 4d \mathfrak{f}^2.$$
In particular, $\mathrm{disc}(O)$ is even. On the other hand, if we put $D:=\mathfrak{f}^2 d$ then $D$ is not a square (because $d$ is square-free) and 
$$K=\bbQ(\sqrt{d})=\bbQ(\sqrt{D}), \quad    O=\bbZ+\sqrt{D}\ \bbZ, \quad \discr(O)=4d \mathfrak{f}^2 =4D.$$
It follows  from \eqref{trRoot} that 
$\tra(O)=2\bbZ$.  This implies that $O$ enjoys properties (i), (iii), (v) of Lemma \ref{DefEven}.
On the other hand, $O$ does {\sl not} enjoy any of properties  (i)  and (iii) of Lemma \ref{DefOdd}.  In light of Step 2, this proves both Lemmas in the (last remaining) case when $d  \not\equiv 1 (\bmod 4)$.

Suppose that $d \equiv 1 (\bmod 4)$. 
It is known \cite[Ch. 2, Sect. 7, Th.1]{BS} that
$$\mathrm{disc}(K)= d, \quad O=\bbZ+\mathfrak{f}\frac{1+\sqrt{d}}{2}\bbZ, \ 
\mathrm{disc}(O)=\mathrm{disc}(K)\mathfrak{f}^2= d \mathfrak{f}^2.$$
It follows  from \eqref{trRoot} that 
\begin{equation}
\label{trRoot1}
\tra(O)=\bbZ \ \text{ if } \mathfrak{f} \ \text{ is odd};  \quad \tra(O)=2\bbZ \ \text{ if } \mathfrak{f} \ \text{ is even}.
\end{equation}
If $\mathfrak{f}$ is even  then $\tra(O)=2\bbZ$,  $\mathrm{disc}(O)$ is even, $m=\mathfrak{f}/2$ is a positive integer, and 
$$O=\bbZ+\mathfrak{f}\frac{1+\sqrt{d}}{2}\bbZ=\bbZ+m\left(1+\sqrt{d}\right)\bbZ=\bbZ+m\sqrt{d}\ \bbZ=$$
$$\bbZ+\sqrt{m^2 d}\ \bbZ=\bbZ+\sqrt{D}\ \bbZ$$
where $D:=m^2 d$, which is not a square, because $d$ is square-free. We get
$$K=\bbQ(\sqrt{d})=\bbQ(\sqrt{D}), \quad O=\bbZ+\sqrt{D}\ \bbZ, \quad  \discr(O)=\mathfrak{f}^2 d=(2m)^2 d=4D.$$
This implies that $O$ enjoys all the properties (i), (iii), (v)  of Lemma \ref{DefEven}.
On the other hand, $O$ does {\sl not} enjoy any of properties  (i)  and (iii) of Lemma \ref{DefOdd}.  
In light of Step 2, this proves both Lemmas in the case when $d  \not\equiv 1 (\bmod 4)$
and $\mathfrak{f}$ is even.

Now suppose that $\mathfrak{f}$ is {\sl odd}.  It follows from \eqref{trRoot1} that  $\tra(O)=\bbZ$ and $\mathrm{discr}(O)$ is odd. In addition, if we put $m:=(\mathfrak{f}-1)/2 \in \bbZ$ then
$\mathfrak{f}=2m+1$ and
$$O=\bbZ+\mathfrak{f}\frac{1+\sqrt{d}}{2}\bbZ=\bbZ+\frac{\mathfrak{f}+\mathfrak{f}\sqrt{d}}{2}\ \bbZ
=\bbZ+\left(m+\frac{1+\mathfrak{f}\sqrt{d}}{2}\right)\ \bbZ=$$
$$\bbZ+\frac{1+\mathfrak{f}\sqrt{d}}{2}\bbZ=\bbZ+\frac{1+\sqrt{\mathfrak{f}^2d}}{2}\ \bbZ=\bbZ+\frac{1+\sqrt{D}}{2}\bbZ$$
where $D:=\mathfrak{f}^2d$ is not a square, because $d$ is square-free. Since $\mathfrak{f}$ is odd, $\mathfrak{f}^2\equiv 1 (\bmod 4)$.
Since $d \equiv 1 (\bmod 4)$, the product $D:=\mathfrak{f}^2 d$ is also congruent to $1$ modulo $4$. We get
$$K=\bbQ(\sqrt{d})=\bbQ(\sqrt{D}), \quad O=\bbZ+\frac{1+\sqrt{D}}{2}\ \bbZ,   \quad \discr(O)=\mathfrak{f}^2 d=D.$$
This implies that $O$ enjoys properties (i), (iii), (vii) of Lemma \ref{DefOdd}.
On the other hand, $O$ does {\sl not} enjoy any of properties (i) and (iii) of Lemma \ref{DefEven}.  In light of Step 2, this proves both Lemmas in the case when $d \equiv 1 (\bmod 4)$
and $\mathfrak{f}$ is odd.  This ends the proof of both Lemmas.
\end{proof}

\begin{rem}
\label{localN}
Let $n$ be a positive odd integer, $\phi: K_1 \to K_2$ an isomorphism of quadratic fields,
and $O_1$ and $O_2$ are orders in $K_1$ and $K_2$ respectively.
Suppose that 
$$\phi(O_1[1/n])=O_2[1/n].$$
Then
$$\tra_{K_1/\bbQ}(O_1[1/n])=\tra_{K_2/\bbQ}(O_2[1/n]).$$
It follows from Lemmas \ref{DefOdd}(ii) and \ref{DefEven}(ii) that $O_1$ and $O_2$ have the same parity,
i.e., either they both are odd or both even.
\end{rem}

\begin{rem}
\label{disc4}
Recall that if $\mathfrak{f}$ is an odd integer then  $\mathfrak{f}^2\equiv 1 \ (\bmod 4)$.
On the other hand, if $\mathfrak{f}$ is an even integer then  $\mathfrak{f}^2 \equiv 0 \ (\bmod 4).$

Let $O$ be an {\sl order} in a {\sl quadratic field} $K$. Now the explicit formulas for its discriminant (see Step 3 above) imply the following.

\begin{itemize}
\item[(e)]
$O$ is {\sl even} if and only if $\mathrm{disc}(O)\equiv 0 \ (\bmod 4)$. If this is the case then there is an integer $D$ that is not a square and such that
$$K=\bbQ(\sqrt{D}), \quad O=\bbZ+ \bbZ \ \sqrt{D} , \quad \discr(O)=4D.$$
\item[(o)]
$O$ is {\sl odd} if and only if $\mathrm{disc}(O)\equiv 1 \ (\bmod 4)$. If this is the case then there is an integer $D\equiv 1 \ (\bmod 4)$ that is not a square and such that
$$K=\bbQ(\sqrt{D}), \quad O=\bbZ+ \bbZ \ \frac{1+\sqrt{D}}{2}, \quad \discr(O)=D.$$
\end{itemize}
\end{rem}

The following assertion is a natural complement to Key Lemma \ref{realDIV}(ii).

\begin{lem}
\label{key}
Let $D$ be a negative integer such that 
  $D \equiv 1 (\bmod 4)$.  Let $\beta$ be a positive  integer   that
 divides $D$. Let us put
 $$\tau=1/2+\frac{1}{2\beta}\sqrt{D}\in \mathfrak{H}.$$
 Then
 $$O_{\tau}=\bbZ\left[\frac{1+\sqrt{D}}{2}\right]=\bbZ+ \bbZ \ \frac{1+\sqrt{D}}{2}$$
 if and only if $\beta$ and $D/\beta$ are relatively prime.
 \end{lem}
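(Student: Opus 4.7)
The plan is to reduce the problem to Lemma \ref{realDIV} combined with the standard index-versus-discriminant relationship for nested orders in a number field. First I note that since $D \equiv 1 \pmod{4}$ is odd, every positive divisor of $D$ (in particular $\beta$) is itself odd, so the hypotheses of Lemma \ref{realDIV}(iii) are satisfied for the given $\tau$. Applying Lemma \ref{realDIV}(ii) to this same $\tau$ gives $(1+\sqrt{D})/2 \in O_\tau$, and therefore the inclusion of orders
$$\Z\left[\frac{1+\sqrt{D}}{2}\right] = \Z + \Z\cdot \frac{1+\sqrt{D}}{2} \subseteq O_\tau$$
inside $K = \Q(\sqrt{D})$.

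Next I would compare discriminants. The computation \eqref{Dodd} yields $\mathrm{disc}(\Z[(1+\sqrt{D})/2]) = D$, while Lemma \ref{realDIV}(iii) gives $\mathrm{disc}(O_\tau) = (\beta/\mathbf{d})^2 D$, where $\mathbf{d} = \gcd(D,\beta^2)$. Note that $\beta \mid \mathbf{d}$, since $\beta \mid D$ and $\beta \mid \beta^2$, so the ratio $\mathbf{d}/\beta$ is a positive integer. Invoking the classical identity
$$\mathrm{disc}\left(\Z\left[\tfrac{1+\sqrt{D}}{2}\right]\right) = \left[O_\tau : \Z\left[\tfrac{1+\sqrt{D}}{2}\right]\right]^2 \cdot \mathrm{disc}(O_\tau),$$
one reads off
$$\left[O_\tau : \Z\left[\tfrac{1+\sqrt{D}}{2}\right]\right] = \frac{\mathbf{d}}{\beta},$$
so the two orders coincide if and only if $\mathbf{d} = \beta$.

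Finally I would translate $\mathbf{d} = \beta$ into the stated coprimality condition. Writing $D = \beta\cdot(D/\beta)$, the elementary identity
$$\gcd(D,\beta^2) = \gcd\!\left(\beta\cdot(D/\beta),\ \beta\cdot\beta\right) = \beta\cdot \gcd(D/\beta,\beta)$$
shows that $\mathbf{d} = \beta$ if and only if $\gcd(\beta, D/\beta) = 1$, completing the proof. No serious obstacle is anticipated: Lemma \ref{realDIV} already supplies the structural information about $O_\tau$, and the rest is routine bookkeeping with discriminants, indices, and a one-line gcd identity.
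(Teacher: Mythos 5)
Your proposal is correct and follows essentially the same route as the paper: reduce to Lemma \ref{realDIV}(ii)--(iii), compare the discriminants $D$ and $(\beta/\mathbf{d})^2 D$ via the index-squared formula to get the index $\mathbf{d}/\beta$, and then observe that $\mathbf{d}=\beta$ is equivalent to $\gcd(\beta, D/\beta)=1$. The only cosmetic difference is that you dispatch the last equivalence with the identity $\gcd(\beta\cdot(D/\beta),\beta\cdot\beta)=\beta\gcd(D/\beta,\beta)$, whereas the paper argues the two implications separately.
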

 
 \begin{proof}
 Since $D$ is odd, its divisor $\beta$ is also odd.
 In light of Key Lemma \ref{realDIV}(ii),
 $$\mathcal{O}_D:=\bbZ+ \bbZ \ \frac{1+\sqrt{D}}{2}\subset O_{\tau}.$$
 We need to prove that the equality holds if and only if $\beta$ and $D/\beta$ are relatively prime. By Key Lemma \ref{realDIV}(iii),
 $$\discr(O_{\tau})=(\beta/\mathbf{d})^2 D$$
 where $\mathbf{d}$ is the GCD of $\beta^2$ and $D$. 
 
 Notice that  $\mathcal{O}_D$ is a subgroup of finite index, say, $f\ge 1$ in $O_{\tau}$. 
 Clearly, $\mathcal{O}_D=O_{\tau}$ if and only if $f=1$.
 However,
 $$D=\discr(\mathcal{O}_D)=f^2 \discr(O_{\tau})=f^2 (\beta/\mathbf{d})^2 D,$$
 i.e.,
 $$f^2 (\beta/\mathbf{d})^2=1,$$
 which means that 
 $f=\mathbf{d}/\beta$ (recall that all $f,\beta, \mathbf{d}$ are positive integers). It follows that $f=1$ if and only if $\mathbf{d}=\beta$.
 
Since $\mathbf{d}$ is the GCD of $\beta^2$ and $D$, the integers $\beta^2/\mathbf{d}$ and $D/\mathbf{d}$ are relatively prime. So, if $\mathbf{d}=\beta$ then
$\beta=\beta^2/\beta$ and $D/\beta$ are relatively prime.

Conversely, suppose that the integers $\beta$ and $\tilde{\beta}=D/\beta$ are relatively prime. This implies that
 the GCD of $\beta^2 \ (=\beta \cdot \beta)$ and $D \ (=\beta \cdot \tilde{\beta})$ is $\beta$, i.e., $\mathbf{d}=\beta$. This ends the proof.
 \end{proof}

\section{Isogenies of Elliptic Curves}
\label{OddEvenE}
\begin{proof}[Proof of Proposition \ref{isogPar}]
There is a (dual) isogeny $\psi: E_2\to E_1$ such that $\psi\circ \phi: E_1 \to E_1$ is multiplication by $n$ in $E_1$
and $\phi\circ \psi: E_2 \to E_2$ is multiplication by $n$ in $E_2$ \cite[p. 7]{Gross}. Then we get an isomorphism of $\bbQ$-algebras
$$\Phi: \End^0(E_1)=\End(E_1)\otimes \bbQ \to \End(E_2)\otimes \bbQ=\End^0(E_2),   \ u_1 \mapsto \frac{1}{n} \phi u_1\psi,$$
whose inverse is an isomorphism of $\bbQ$-algebras
$$\Psi: \End^0(E_2)=\End(E_2)\otimes \bbQ \to \End(E_1)\otimes \bbQ= \End^0(E_1),   \ u_2 \mapsto \frac{1}{n} \psi u_2\phi.$$
Clearly, both $\Phi$ and $\Psi$ sends $1$ to $1$, sums to sums and therefore are homomorphisms of $\bbQ$-vector spaces. On the other hand
if $u_1 \in \End^0(E_1)$ then
$$\Psi(\Phi(u_1))= \frac{1}{n} \psi \Phi(u_1)\phi=\frac{1}{n} \psi \left( \frac{1}{n} \phi u_1\psi\right)\phi=\frac{1}{n} \left(\frac{1}{n} \psi\phi\right)u_1
\left( \frac{1}{n}\psi \phi\right)=u_1.$$
Since both $\End^0(E_1)$ and $\End^0(E_2)$ are $\bbQ$-vector spaces of the same dimension 2, $\Phi$ is an isomorphism of $\bbQ$-vector spaces and $\Psi$ is its
inverse, hence, also an isomorphism of $\bbQ$-vector spaces.
The only thing that remains to check  is that both $\Phi$ and $\Psi$ are compatible with multiplication. Let us check it.
If $u_1,v_1 \in \End^0(E_1)$ then
$$\Phi(u_1)\Phi(v_2)=\frac{1}{n} \phi u_1 \psi \frac{1}{n} \phi  v_1\psi=\frac{1}{n} \phi u_1 \frac{1}{n} (\psi  \phi)  v_1\psi=\frac{1}{n} \phi u_1 v_1\psi=\Phi(u_1v_1).$$
This proves  that $\Phi$ is a homomorphism of $\bbQ$-algebras and therefore is an isomorphism of quadratic fields $\End^0(E_1)$ and $\End^0(E_2)$.
Hence, its inverse $\Psi$ is also a field isomorphism.

Now I claim that
\begin{equation}
\label{End1n}
\Phi(\End(E_1)[1/n])= \End(E_2)[1/n], \quad \Psi(\End(E_2)[1/n])= \End(E_1)[1/n].
\end{equation}
Indeed, it follows from the definition of $\Phi$ and $\Psi$ that
$$\Phi(\End(E_1))\subset \frac{1}{n}  \End(E_2)\subset  \End(E_2)[1/n],$$
$$\Psi(\End(E_2))\subset 
 \frac{1}{n}  \End(E_1)\subset  \End(E_1)[1/n].$$
 It follows that
 $$\Phi(\End(E_1)[1/n])\subset \End(E_2)[1/n], \quad \Psi(\End(E_2)[1/n])\subset \End(E_1)[1/n]$$
and therefore
$$\End(E_1)[1/n])=\Psi(\Phi(\End(E_1)[1/n]))\subset \Psi(\End(E_2)[1/n]),$$
$$\End(E_2)[1/n]=  \Phi(\Psi(\End(E_2)[1/n]\subset \Phi(\End(E_1)[1/n]).$$
We get
$$\End(E_1)[1/n])\subset \Psi(\End(E_2)[1/n])]\subset \End(E_1)[1/n],$$
$$\End(E_2)[1/n]\subset \Phi(\End(E_1)[1/n])\subset \End(E_2)[1/n],$$
which proves \eqref{End1n}.

So, $\Phi$ is an isomorphism of quadratic fields  $\End^0(E_1)$ and $\End^0(E_2)$ such that
$$\Phi(\End(E_1)[1/n])= \End(E_2)[1/n].$$
Now the desired result follows from Remark \ref{localN}.
\end{proof}

\begin{proof}[Proof of Proposition \ref{isogR}]
We may assume that
$$E_1 = E_{1,\bbR} \times_{\bbR}\bbC, \quad E_2=E_{2,\bbR} \times_{\bbR}\bbC.$$
Let $\phi: E_1 \to E_2$ be an isogeny and $\bar{\phi}: E_1 \to E_2$ its ``complex-conjugate''.
If $\bar{\phi}=\phi$ then $\phi$ could be descended to an isogeny $E_{1,\bbR}\to E_{2,\bbR}$ over $\bbR$.

Suppose that $\bar{\phi}\ne \phi$. Then its difference 
$\psi:=\bar{\phi}- \phi: E_1 \to E_2$ is a nonzero homomorphism of elliptic curves and therefore is an isogeny.
Its complex-conjugate
$\bar{\psi}=\overline{\bar{\phi}- \phi}$ coincides with $\phi-\bar{\phi}=-\psi$.  On the other hand,  it is well known that {\sl not} all endomorphisms
of the CM elliptic curve $E_1$ are defined over $\bbR$, i.e., there is an endomorphism $\alpha$ of $E_1$ such that its complex-conjugate
$\bar{\alpha}$ does {\sl not} coincide with $\alpha$ (see Lemma \ref{CRCM} below). 
 This means that $\beta=\bar{\alpha}-\alpha: E_1 \to E_1$ is a nonzero endomorphism of $E_1$,
i.e., is an isogeny. Clearly, the complex conjugate $\bar{\beta}$ of $\beta$ coincides with
$$\overline{\bar{\alpha}- \alpha}=\alpha-\bar{\alpha}=-\beta.$$
So,
$$\bar{\psi}=-\psi, \quad \bar{\beta}=-\beta.$$
This implies that the composition $\lambda:=\psi \circ\beta: E_1 \to E_2$ is an isogeny, whose complex conjugate $\bar{\lambda}=\overline{\psi \circ\beta}$ equals
$$\bar{\psi}\circ \bar{\beta}=(-\psi)\circ (-\beta)=\psi\circ \beta=\lambda.$$
So,  the nonzero homomorphism $\lambda$ coincides with its ``complex-conjugate'',
%$\overline{\psi \circ \beta}=\psi\circ\beta$,
 i.e., is defined over $\bbR$. This means that
there is a $\bbR$-homomorphism $\lambda_{\bbR}: E_{1,\bbR} \to E_{2,\bbR}$ of elliptic curves, whose ``complexification'' $E_1 \to E_2$ coincides with 
$\lambda$.  Since $\lambda \ne 0$, we obtain that  $\lambda_{\bbR} \ne 0$ and therefore is an {\sl isogeny}.
\end{proof}
The following assertion (and its proof that I am going to reproduce) is pretty well known but I was unable to find a reference.
\begin{lem}
\label{CRCM} 
Let $E_{\bbR}$ be an elliptic curve over $\bbR$.  Then the ring $\End_{\bbR}(E_{\bbR})$  of $\bbR$-endomorphisms of $E_{\bbR}$ is $\bbZ$
and the corresponding endomorphism algebra
$$\End_{\bbR}^{0}(E_{\bbR}):=\End_{\bbR}(E_{\bbR})\otimes \bbQ$$
is $\bbQ$.
\end{lem}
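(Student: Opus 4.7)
The plan is to split on whether the complexification $E:=E_{\R}\times_{\R}\C$ has complex multiplication. Since multiplication-by-$n$ is defined over the prime field, we always have $\Z\subset\End_{\R}(E_{\R})\subset\End(E)$, so it suffices to prove $\End_{\R}(E_{\R})\otimes\Q=\Q$, and then integrality in the order $\End(E)$ will give $\End_{\R}(E_{\R})=\Z$.

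In the non-CM case, $\End(E)=\Z$ by definition, so $\End_{\R}(E_{\R})\subset\Z$ and we are done immediately. The content of the lemma is in the CM case, where $\End(E)$ is an order in an imaginary quadratic field $K=\End^{0}(E)$. Here the key construction is the action of complex conjugation. Since $E_{\R}$ is defined over $\R$, the nontrivial element $c\in\Gal(\C/\R)$ acts on $E(\C)$, and for any $\alpha\in\End(E)$ the map $\bar\alpha:=c\circ\alpha\circ c^{-1}$ is again an endomorphism of $E$; an element $\alpha$ descends to $\End_{\R}(E_{\R})$ precisely when $\bar\alpha=\alpha$. This defines a ring involution of $\End(E)$, which extends $\Q$-linearly to a $\Q$-algebra involution $\sigma$ of $K$.

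The crucial step is to identify $\sigma$ with the nontrivial automorphism of $K$. For this I would use the action on the Lie algebra: $\End(E)$ acts on the one-dimensional complex vector space $\Lie(E)$, yielding a ring embedding $\rho\colon\End(E)\hookrightarrow\C$ that extends to an embedding $K\hookrightarrow\C$. Since $E$ comes from $E_{\R}$, one has $\Lie(E)=\Lie(E_{\R})\otimes_{\R}\C$ with $c$ acting by $v\otimes z\mapsto v\otimes\bar z$. A direct computation shows that if $\alpha$ acts on $\Lie(E)$ by the scalar $\rho(\alpha)$, then $\bar\alpha=c\alpha c^{-1}$ acts by $\overline{\rho(\alpha)}$, i.e., $\rho(\sigma(\alpha))=\overline{\rho(\alpha)}$. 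Since $K$ is imaginary quadratic, the complex conjugation on $\C$ restricts to the unique nontrivial automorphism of $\rho(K)$; hence $\sigma$ is nontrivial and its fixed subfield is $\Q$.

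Therefore $\End_{\R}(E_{\R})=\End(E)^{\sigma}\subset K^{\sigma}=\Q$. Because $\End(E)$ is an order in $K$, every element of it is an algebraic integer, so $\End(E)\cap\Q=\Z$, which gives $\End_{\R}(E_{\R})=\Z$ and $\End_{\R}^{0}(E_{\R})=\Q$. The main technical point to verify carefully is the compatibility $\rho\circ\sigma=(\text{complex conjugation})\circ\rho$, i.e., that the geometric action of $c$ on endomorphisms really does match complex conjugation on the Lie-algebra character; everything else is then formal.
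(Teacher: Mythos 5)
Your proof is correct, but it is organized differently from the paper's. The paper argues by direct contradiction: it assumes $\End_{\R}(E_{\R})\neq\Z$, observes that then $\End_{\R}^{0}(E_{\R})$ must equal the full imaginary quadratic field $K=\End^{0}(E)$, and lets $\End_{\R}(E_{\R})$ act on the \emph{real} one-dimensional space $\Omega^1_{\R}(E_{\R})$ of differentials of the first kind; this yields an injective ring homomorphism $\End_{\R}(E_{\R})\hookrightarrow\R$ which extends to a field embedding $K\hookrightarrow\R$, impossible for an imaginary quadratic field. You instead set up the Galois-descent involution $\sigma(\alpha)=c\alpha c^{-1}$ on $\End(E)$, identify it with the nontrivial automorphism of $K$ by computing its effect on the Lie-algebra character ($\rho\circ\sigma=\overline{(\cdot)}\circ\rho$), and take fixed points. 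The two arguments rest on the same underlying fact --- a real structure forces the CM character to be real-valued on $\R$-rational endomorphisms, which an imaginary quadratic field forbids --- but the paper's version is shorter because it never needs to identify $\sigma$ or verify the compatibility $\rho\circ\sigma=\overline{(\cdot)}\circ\rho$ (the one step you rightly flag as requiring care); it simply works with the real form of the (co)tangent space from the start. Your version buys a little more: it exhibits the conjugation action on $\End(E)$ explicitly, which is the mechanism the paper later exploits in the proof of Proposition \ref{isogR}. One small remark: your final step $\End(E)\cap\Q=\Z$ via integrality is fine, but it is also immediate from $\Z\subset\End_{\R}(E_{\R})\subset K^{\sigma}=\Q$ together with the fact that $\End_{\R}(E_{\R})$ is finitely generated as a group, so no appeal to algebraic integrality is strictly needed.
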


\begin{proof}
Let us consider the complexification $E=E_{\bbR}\times_{\bbR}\bbC$ of $E_{\bbR}$. We have
$$\bbZ\subset \End_{\bbR}(E_{\bbR})\subset \End(E), \quad \bbQ\subset  \End_{\bbR}^{0}(E_{\bbR})\subset \End^{0}(E).$$
If $\End_{\bbR}(E_{\bbR})\ne \bbZ$ then 
$$\End(E)\ne \bbZ, \quad   \End^{0}(E) \ne \bbQ,$$ i.e., $E$ is an elliptic curve with CM
and $K:= \End^{0}(E)$ is an imaginary {\sl quadratic} field. This implies that
$$K= \End^{0}(E)=\End_{\bbR}^{0}(E_{\bbR})$$
and $\End_{\bbR}(E_{\bbR})$ is an order in the quadratic field $K$. Let $\Omega^1_{\bbR}(E_{\bbR})$ be the space of  differentials of the first kind on $E_{\bbR}$,
which is a  $\bbR$-vector space of dimension $1$. By functoriality, $\End_{\bbR}(E_{\bbR})$  %and $\End_{\bbR}^{0}(E_{\bbR})$
 acts on $\Omega^1_{\bbR}(E_{\bbR})$,
which gives us the injective  ring homomorphism
$$\delta:\End_{\bbR}(E_{\bbR}) \hookrightarrow \End_{\bbR}(\Omega^1_{\bbR}(E_{\bbR}))=\bbR$$
\cite[Ch. I, Sect. 2.8]{ShimuraT}.  By $\bbQ$-linearity, $\delta$ extends  to the homomorphism of $\bbQ$-algebras
$$\delta_{\bbQ}:K=\End_{\bbR}^{0}(E_{\bbR})=\End_{\bbR}(E_{\bbR})\otimes\bbQ\to \End_{\bbR}(\Omega^1_{\bbR}(E_{\bbR}))=\bbR,$$
which is also injective (since $K$ is a field). So, $\delta_{\bbQ}: K \to \bbR$ is a field embedding. But such an embedding does {\sl not} exist, 
since $K$ is an {\sl imaginary} quadratic field. The obtained contradiction proves the desired result.
\end{proof}

\section{Odd  elliptic curves revisited}
\label{revisit}
The aim of this section to ``classify'' (count) all  CM elliptic curves  with given odd discriminant $D$ and real $j$-invariant.
 We will do it in Theorem \ref{endoD},  using
Lemma \ref{key} and
``classifying'' (counting  the number of)  positive divisors $\beta$ of  $D$
that are relatively prime to $D/\beta$. Let us start with the following definition (notation).

\begin{defn}[Definition-Notation]
Let $n$ be a nonzero integer. We write $P_n$ for the (finite) set of prime divisors of $n$. By the Main Theorem of Arithmetic,
$$n= \pm \prod_{p \in P_n} p^{e_{n,p}}$$
where $e_{n,p}$ are certain positive integers uniquely determined by $n$.

 Let us call a divisor $r$ of $n$ a {\sl saturated} divisor if the integers $r$ and $n/r$ are relatively prime.
 
 Clearly, $r$ is a saturated divisor of $n$ if and only if $|n|/r$ is one. It is also clear  that $r$ is a saturated divisor of $n$
 if and only if $-r$ is also one.
\end{defn}

The following assertion is an easy exercise in elementary number theory that will be proven at the end of this section.
\begin{prop}
\label{fullD}
The following statements hold.
\begin{itemize}
\item[(i)]
Let $S$ be a subset of $P_n$.  Then both  integers $$\mathbf{r}_{n,S}=  \prod_{p \in S} p^{e_{n,p}}$$ and $-\mathbf{r}_{n,S}$ 
are saturated divisors of $n$.
(As usual, if $S$ is the empty set then $r_{n, \emptyset}=1$.)
Conversely, if $c$ is a saturated divisor of $n$ then  $-c$ is also a saturated divisor of $n$ and
$$c= \pm  \prod_{p \in P_c} p^{e_{n,p}}=\pm \mathbf{r}_{n,P_c}.$$
\item[(ii)]
The number of positive saturated divisors of $n$ is the number of subsets of $P_n$, i.e., $2^{\#(P_n)}$
where $\#(P_n)$ is the cardinality of $P_n$, i.e., the number of prime divisors of $n$.
\item[(iii)]
If $S_1$ and $S_2$ are disjoint subsets of $P_n$ then $\mathbf{r}_{n, S_1}$ and $\mathbf{r}_{n, S_2}$ are relatively prime and
$$\mathbf{r}_{n, S_1\cup S_2}=\mathbf{r}_{n, S_1} \mathbf{r}_{n, S_2}.$$
\item[(iv)]
Suppose that $n$ is a negative integer that is congruent to $1$ modulo $4$. Then the number
of positive saturated divisors $r$ of $n$ with $r<\sqrt{|n|}$ equals $2^{\#(P_n)-1}$.
\end{itemize}
\end{prop}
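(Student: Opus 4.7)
The plan is to reduce parts (i)--(iii) to the uniqueness of prime factorization and then derive (iv) from (ii) via a fixed-point-free involution. For (i), given $S\subseteq P_n$, I would write $n=\pm\prod_{p\in P_n}p^{e_{n,p}}$ and observe that $\mathbf{r}_{n,S}$ divides $n$ with complementary quotient $\pm\mathbf{r}_{n,P_n\setminus S}$; the prime supports of $\mathbf{r}_{n,S}$ and $\mathbf{r}_{n,P_n\setminus S}$ are disjoint, so they are coprime, and hence $\mathbf{r}_{n,S}$ (and trivially $-\mathbf{r}_{n,S}$) is saturated. For the converse, if $c$ is a saturated divisor of $n$ and $p\in P_c$, I would argue that the full prime power $p^{e_{n,p}}$ must divide $c$: otherwise some positive power of $p$ divides $n/c$, contradicting $\gcd(c,n/c)=1$. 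Combined with $c\mid n$, this forces $c=\pm\prod_{p\in P_c}p^{e_{n,p}}=\pm\mathbf{r}_{n,P_c}$.

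For (ii), part (i) yields a bijection $S\mapsto\mathbf{r}_{n,S}$ from the power set of $P_n$ to the set of positive saturated divisors of $n$; injectivity is immediate from unique factorization, and surjectivity is the converse direction of (i). For (iii), if $S_1\cap S_2=\emptyset$ then the prime supports of $\mathbf{r}_{n,S_1}$ and $\mathbf{r}_{n,S_2}$ are disjoint, which gives coprimality, and the multiplicativity $\mathbf{r}_{n,S_1\cup S_2}=\mathbf{r}_{n,S_1}\mathbf{r}_{n,S_2}$ is immediate from the definition.

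For (iv), the crucial observation is that $|n|$ is \emph{not} a perfect square under the hypotheses: if $|n|=m^2$, then $m$ even forces $n\equiv 0\pmod 4$, and $m$ odd forces $n\equiv -1\equiv 3\pmod 4$, both incompatible with $n\equiv 1\pmod 4$. Consequently, the map $r\mapsto |n|/r$ is a fixed-point-free involution on the set of positive saturated divisors of $n$ (the image is again saturated because $\gcd(|n|/r,r)=\gcd(n/r,r)=1$ up to sign), and it swaps $\{r<\sqrt{|n|}\}$ with $\{r>\sqrt{|n|}\}$. Combined with (ii), this yields the count $2^{\#(P_n)}/2=2^{\#(P_n)-1}$. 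I do not anticipate a real obstacle here; the only subtleties are sign-bookkeeping and confirming that $|n|/r$ remains saturated when $r$ is, both of which are routine.
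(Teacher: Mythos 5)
Your proposal is correct and follows essentially the same route as the paper: complementary prime supports for (i), the bijection $S\mapsto\mathbf{r}_{n,S}$ for (ii), disjointness of supports for (iii), and for (iv) the observation that $|n|\equiv 3\pmod 4$ is not a square, so $r\mapsto |n|/r$ is a fixed-point-free involution pairing divisors below and above $\sqrt{|n|}$. No gaps.
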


\begin{thm}
\label{endoD}
Let $D$ be a negative integer that is congruent to $1$ modulo $4$. 

\begin{enumerate}
\item[(1)]
Let $E$ be an elliptic curve  over $\bbC$ with $j(E)\in \bbR$.  Then the following two conditions are equivalent.
\begin{itemize}
\item[(i)]
$E$ has CM and its endomorphism ring
$\End(E)$ is an order of discriminant $D$. (In particular, $E$ is odd.)
\item[(ii)]
There exists a positive 
saturated 
divisor $\beta$ of $D$ such that 
%$\beta$ and $D/\beta$ are relatively prime,
$\beta<\sqrt{|D|}$ and
$$j(E)=j(\tau) \ \text{where} \ \tau=\frac{1}{2}+\frac{\sqrt{D}}{2\beta}=\frac{1}{2}+\frac{\sqrt{|D|}}{2\beta}\mathbf{i}\in \mathfrak{T}_2\subset  \mathfrak{H}.$$
%j\left(\frac{1}{2}+\frac{\sqrt{D}}{2\beta}\right).$$
\end{itemize}
%If this this case then such a $\tau$ is unique.
\item[(2)]
Let $\mathcal{J}_D(\bbR)$ be the set of $j$-invariants of all elliptic curves $E$ that enjoy the equivalent properties (i)-(ii) of (1).
Let $s_D$ be the number of positive prime divisors of $D$.
Then $\mathcal{J}_D(\bbR)$ consists of $2^{s_D-1}$ distinct real numbers and lies in the semi-open interval
$\left[j\left(\frac{1+\sqrt{D}}{2}\right), 1728\right)$.
\end{enumerate}
\end{thm}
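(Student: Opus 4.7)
The plan is to translate both parts of Theorem \ref{endoD} into statements about positive odd saturated divisors of $D$ and then read off cardinality and location in $\R$ from the monotone bijection $f$ discussed in Section \ref{mainproof}.

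For the equivalence in part (1), I would argue as follows. For (i) $\Rightarrow$ (ii), assume $E$ has CM and $\discr(\End(E)) = D$. Since $D \equiv 1 \pmod 4$, the order is odd by Remark \ref{disc4}(o), so $E$ is odd. Choose $\tau \in \mathfrak{T}$ with $E \cong \mathcal{E}_\tau$ (using the bijectivity of $F:\mathfrak{T}\to\R$). By Lemma \ref{realDIV}(i), $\mathrm{Re}(\tau) \neq 0$, forcing $\tau \in \mathfrak{T}_2$. Since the unique order of discriminant $D$ in $\Q(\sqrt D)$ is $\Z[(1+\sqrt D)/2]$, we have $(1+\sqrt D)/2 \in O_\tau$, so Lemma \ref{realDIV}(ii) yields a positive odd divisor $\beta$ of $D$ with $\tau = 1/2 + \sqrt{D}/(2\beta)$. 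Lemma \ref{key} then pins down that $O_\tau$ equals $\Z[(1+\sqrt{D})/2]$ precisely when $\beta$ is a saturated divisor of $D$. The condition $\tau \in \mathfrak{T}_2$ forces $\mathrm{Im}(\tau) = \sqrt{|D|}/(2\beta) > 1/2$, equivalent to $\beta < \sqrt{|D|}$. The reverse implication (ii) $\Rightarrow$ (i) is immediate by running the same chain backwards: for such a $\beta$, set $\tau = 1/2 + \sqrt{D}/(2\beta)$; Lemma \ref{key} gives $O_\tau = \Z[(1+\sqrt{D})/2]$ of discriminant $D$, and $\tau \in \mathfrak{T}_2$ because $\beta < \sqrt{|D|}$.

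For part (2), the counting reduces to Proposition \ref{fullD}(iv), which gives exactly $2^{s_D - 1}$ positive saturated divisors $\beta$ of $D$ satisfying $\beta < \sqrt{|D|}$ (note $|D|$ cannot be a perfect square because $|D|\equiv 3\pmod 4$). The corresponding $\tau_\beta = 1/2 + \sqrt{D}/(2\beta)$ are pairwise distinct points of $\mathfrak{T}_2$, and since the map $f:[1/2,\infty)\to(-\infty,1728]$ from Section \ref{mainproof} is a strictly decreasing bijection, the values $j(\tau_\beta) = f(\sqrt{|D|}/(2\beta))$ are $2^{s_D-1}$ distinct real numbers.

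Finally, to locate the interval: the imaginary part $y_\beta := \sqrt{|D|}/(2\beta)$ is maximized at $\beta = 1$ (always a saturated divisor, and less than $\sqrt{|D|}$ since $|D|\geq 3$), giving $\tau_1 = (1+\sqrt D)/2$ and $j(\tau_1) = j((1+\sqrt D)/2)$; as $\beta$ grows toward $\sqrt{|D|}$, $y_\beta$ decreases toward $1/2$ without reaching it. Since $f$ is decreasing with $f(1/2) = 1728$, the image of the admissible $y_\beta$ under $f$ lies in $[j((1+\sqrt D)/2),\,1728)$.

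The routine bookkeeping aside, the only genuinely delicate step is the characterization of the endomorphism ring of $\mathcal{E}_\tau$ in terms of the divisibility of $D$ by $\beta$; that work is already done in Lemma \ref{realDIV}(ii)-(iii) and packaged cleanly in Lemma \ref{key}, so here the proof is essentially a matter of assembling these pieces and checking the boundary $\beta < \sqrt{|D|}$ coming from the normalization $\tau \in \mathfrak{T}_2$.
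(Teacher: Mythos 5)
Your proposal is correct and follows essentially the same route as the paper: both directions of part (1) are obtained by normalizing $E=\mathcal{E}_\tau$ with $\tau\in\mathfrak{T}$, excluding $\mathfrak{T}_1$ via Lemma \ref{realDIV}(i), and then combining Remark \ref{disc4}, Lemma \ref{realDIV}(ii) and Lemma \ref{key} to translate the condition on $O_\tau$ into the existence of a positive saturated divisor $\beta<\sqrt{|D|}$, while part (2) reduces to Proposition \ref{fullD}(iv) together with the strict monotonicity of $t\mapsto j(1/2+\mathbf{i}t)$. Your explicit identification of the endpoints of the interval $\left[j\left(\frac{1+\sqrt{D}}{2}\right),1728\right)$ via $\beta=1$ and $\beta\to\sqrt{|D|}$ merely spells out what the paper leaves as ``follows readily.''
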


\begin{proof}

(1) Suppose that $E$ enjoys the properties (i), i.e.,  $j(E) \in \bbR$, $E$ has CM and the order $\End(E)$ has discriminant $D$.
Since $j(E) \in \bbR$, there is (precisely) one $\tau \in \mathfrak{T}$ such that $E$ is isomorphic to $\mathcal{E}_{\tau}$
 and $j(E)=j(\tau)$.  So, we may assume that $E=\mathcal{E}_{\tau}$.
 In particular,  $\mathcal{E}_{\tau}$ is odd. It follows from Key Lemma \ref{realDIV}(i) that $\mathrm{Re}(\tau) \ne 0$,
 i.e., 
 $$\tau \in \mathfrak{T}_2, \ \mathrm{Re}(\tau)=\frac{1}{2}.$$
 Since $O=\End(\mathcal{E}_{\tau}) \cong O_{\tau}\subset \bbC$ is an order with {\sl odd} discriminant $D$, it follows from Remark \ref{disc4} that
 \begin{equation}
 \label{revisitO}
 O_{\tau}=\bbZ\left[\frac{1+\sqrt{D}}{2}\right]=\bbZ+\bbZ \ \frac{1+\sqrt{D}}{2}=\mathcal{O}_D\subset \bbC.
 \end{equation}
 Combining  Lemmas \ref{realDIV} and \ref{key}, we conclude that \eqref{revisitO} holds if and only if
  there is a  saturated positive divisor $\beta$ of $D$ (each divisor of $D$ is obviously odd) such that
 $$ \tau=\frac{1}{2}+\frac{\sqrt{D}}{2\beta}=\frac{1}{2}+\frac{\sqrt{|D|}}{2\beta}\mathbf{i}.$$
 Since $\tau \in \mathfrak{T}_2$, the imaginary part of $\tau$ is strictly greater than $1/2$, i.e,
 $$\beta<\sqrt{|D|}.$$
 This proves the equivalence of (i) and (ii).
 
 (2) In order to prove the second assertion of our theorem,
              recall that the function
 $$[1/2, \infty] \to \bbR, \quad t \mapsto j(1/2+\mathbf{i}t )$$
 is {\sl strictly} decreasing and $j(1/2+ \mathbf{i}/2)=1728$. 
 Now the desired result  follows readily from Proposition \ref{fullD}(iv).
\end{proof}

\begin{proof}[Proof of Proposition \ref{fullD}]

$ $\newline

(i) Let $S^{\prime}=P_n \setminus S$. Then 
$$n=\pm \mathbf{r}_{n,S} \mathbf{r}_{n, S^{\prime}}, \quad \frac{n}{\mathbf{r}_{n,S}}=\pm \mathbf{r}_{n,S^{\prime}}.$$
 If $p$ is a prime divisor of  both $\mathbf{r}_{n,S}$ and $\mathbf{r}_{n,S^{\prime}}$
then it must belong to both sets $S$ and $S^{\prime}$. However,   $S$ and $S^{\prime}$ do {\sl not} meet each other. Hence,
$\mathbf{r}_{n,S}$ and $\mathbf{r}_{n,S^{\prime}}=\pm n/\mathbf{r}_{n,S}$ have no common prime factors and therefore are relatively prime. This implies that the divisor
$\mathbf{r}_{n,S}$ (and therefore $-\mathbf{r}_{n,S}$)  of $n$ is saturated.

Conversely, suppose that $c$ is a saturated divisor of $n$. Then $P_c \subset P_n$ and
$$c=\pm \prod_{p\in P_r}p^{e_{c,p}}, \quad 1 \le e_{c,p}\le e_{n,p} \ \forall p \in P_c.$$
I claim that  
$$e_{c,p}=e_{n,p} \ \forall p \in P_c,  \ \text{i.e.}, \  d=\pm \mathbf{r}_{P_c}.$$
Indeed, suppose that $e_{c,p}<e_{n,p}$ for some $p\in P_d$. Then $p$ divides both $c$ and $n/c$,
which contradicts the saturatedness of $c$. This ends the proof.

(ii) It follows from (i) that the set of positive saturated divisors of $n$ coincides with 
$$\{\mathbf{r}_{n,S} \mid S \subset P_n\}.$$ 
It is also clear that if $S_1$ and $S_2$ are distinct subsets of $P_n$ then $\mathbf{r}_{n,S_1} \ne \mathbf{r}_{n,S_2}$,
because  their sets of prime divisors (i.e., $S_1$ and $S_2$) do {\sl not} coincide.
So, the number of positive saturated divisors of $n$ equals the number of subsets of $P_n$, which is $2^{\#(P_n)}$.

(iii) Since $S_1$ (resp. $S_2$) is the set of prime divisors of $\mathbf{r}_{n,S_1}$
(resp.$\mathbf{r}_{n,S_2}$) and $S_1$ does {\sl not} meet $S_2$,  the integers
 $\mathbf{r}_{n,S_1}$ and $\mathbf{r}_{n,S_2}$ have no common prime factors and therefore are relatively prime.  On the other hand,
 $$\mathbf{r}_{n,S_1\cup S_2}=\prod_{p\in S_1\cup S_2} p^{e_{n,p}}=
 \left(\prod_{p\in S_1} p^{e_{n,p}}\right)  \left(\prod_{p\in S_2} p^{e_{n,p}}\right)=
 \mathbf{r}_{n,S_1} \mathbf{r}_{n,S_2}.$$

(iv) Since $n \equiv 1 (\bmod 4)$, the positive odd integer $|n|=(-n)$ is congruent to $3$ modulo $4$
and therefore is {\sl not} a square. Hence, if $\beta$ is a positive saturated divisor of $n$ then
$|n|/\beta$ is also a positive saturated divisor of $n$ while $\beta \ne |n|/\beta$. Clearly,
 there is precisely one element in the pair $\{\beta, |n|/\beta\}$ that is strictly less than $\sqrt{|n|}$.
In light of (ii), the number of positive saturated divisors of $n$ that satisfy this inequality is
 $2^{\#(P_n)}/2= 2^{\#(P_n)-1}$.
\end{proof}


\begin{thebibliography}{99}
\bibitem{Artin} E. Artin and G. Whaples, {\sl Axiomatic characterization of fields by the product formula for valuations}.
Bull. Amer. Math. Soc. {\bf 51:7} (1945), 469--492.
Reprinted in: Exposition by Emil Artin: A selection. American Mathematical Society, Providence, RI, 2007.
\bibitem{BS} Z.I. Borevich, I.R. Shafarevich, Number Theory. Academic Press, New York and London, 1966.
%\bibitem{Bourbaki} N. Bourbaki, Alg\'ebre Commutative, Chapitres 1 \'a  4. Springer-Verlag, Berlin Heidelberg 2006.
\bibitem{CTP} J.-L. Colliot-Th\'el\`ene and A. Pirutka, {\sl Certaines fibrations en surfaces quadriques r\'eelles}. 
	arXiv:2406.00463 [math.AG].
\bibitem{Knapp} A. Knapp, Elliptic Curves. Princeton University Press, Princeton, NJ, 1992.
\bibitem{Gross} B. Gross, Arithmetic on elliptic curves with complex multiplication. Lecture Notes in Math.  {\bf 776}, Springer-Verlag,
Berlin Heidelberg New York, 1980.
\bibitem{HM} D. Husem\"oller, Elliptic Curves, 2nd edition. GTM {\bf 111}, Springer-Verlag, New York,   2004.
%\bibitem{MT} A. Fr\"olich, M. Taylor, Algebraic Number Theory. Cambridge University Press, 1991.
\bibitem{LangEF} S. Lang, Elliptic Functions, 2nd edition. GTM {\bf 112}, Springer-Verlag, New York, 1987.
\bibitem{Shi} G. Shimura, Introduction to the arithmetic theory of automorphic functions.   Princeton University Press, Princeton, NJ, 1971.
\bibitem{ShimuraT} G. Shimura, Abelian varieties with Complex Multiplication and Modular Functions.  Princeton University Press, Princeton, NJ, 1998.
\bibitem{Sil1} J. Silverman,  Arithmetic of elliptic curves, 2nd edition. GTM {\bf 106},
Springer-Verlag, New York, 2009.
\bibitem{Sil} J. Silverman,  Advanced topics in the arithmetic of elliptic curves. GTM {\bf 151},
Springer-Verlag, New York, 1994.
%\bibitem{Zagier} D. Zagier, Zetafunktionen und quadratische K\"orper. 
% Springer-Verlag, Berlin Heidelberg, New York, 1981.

\end{thebibliography}
\end{document}